\providecommand{\customgenericname}{}
\newcommand{\newcustomtheorem}[2]{%
  \newenvironment{#1}[1]
  {%
   \renewcommand\customgenericname{#2}%
   \renewcommand\theinnercustomgeneric{##1}%
   \innercustomgeneric
  }
  {\endinnercustomgeneric}
}
\theoremstyle{plain} \numberwithin{equation}{section}
\newtheorem{thm}{Theorem}[section]
\newtheorem{cor}[thm]{Corollary}
\newtheorem{conj}{Conjecture}
\newtheorem{lem}[thm]{Lemma}
\newtheorem{prop}[thm]{Proposition}
\newtheorem*{clm*}{Claim}
\theoremstyle{definition}
\newtheorem{defn}[thm]{Definition}
\newtheorem{example}[thm]{Example}
\newtheorem{quest}[conj]{Question} \topmargin-2cm
\newcommand{\diam}{\operatorname{diam}}
\renewcommand{\int}{\operatorname{int}}
\newcommand{\cl}{\operatorname{cl}}
\newcommand{\cll}{\operatorname{cl}_{\mathbb S^2}}
\newcommand{\im}{\operatorname{Im}}
\begin{document}

\title{Peano dimension of fundamental groups}

\author{Gregory R Conner${}^1$}\thanks{$^1$Supported by Simons Foundation Collaboration Grant 646221.}

\author{Curtis Kent${}^2$}\thanks{$^2$Supported by Simons Foundation Collaboration Grant 587001.}

\begin{abstract}
We define the Peano dimension for groups arising as fundamental groups, which generalizes the classical definition of geometric dimension of finitely presented groups.  We conjecture that the Peano dimension of the fundamental group of a aspherical Peano continuum $X$ is equal to the homotopy dimension of $X$.  We prove the conjecture for one-dimensional or planar Peano continua.  This answers a question posed by  Cannon and Conner in 2007 concerning the homotopy dimension of planar sets.

\end{abstract}

\maketitle

\section{Introduction}

The geometric group theory concerns itself with the relationships between algebraic properties of groups and topological and geometric properties of spaces on which they act.  The \emph{geometric dimension} and \emph{cohomological dimension} of a group form a  classical and well-used pair of notions relating them.  These two dimensions coincide for finitely presented groups (except in dimension 2 where their equality is the Eilenberg-Ganea conjecture) and form a basis for many standard constructions and arguments in the area.  The reason for their utility is that cohomological dimension has a purely algebraic definition while geometric dimension is entirely   topological. This allows one to move back and forth between the worlds of algebra and topology -- with the standard Galois correspondence from covering space theory as a major tool.

Unfortunately for fundamental groups of non-locally contractible spaces and other uncountable groups both of these notions are very difficult to compute.  In \cite{cc4}, Cannon and Conner consider the notion of \emph{homotopy dimension} and discern which subspaces of the plane can be homotoped to be one-dimensional.  Their constructions are topological in nature.  The key advantages of homotopy dimension are that it is defined in a much broader context and it agrees with geometric dimension for finitely presented groups.

The \emph{geometric dimension} of a finitely presented group $G$ is the minimal dimension of a space with fundamental group $G$  and contractible universal cover.
Such spaces for fundamental groups of locally complicated spaces are unwieldy. Alternatively, one could replace the requirement of having a contractible universal cover with the property that all higher homotopy groups are trivial.  This property is equivalent when considering CW-complexes but allows for potentially nicer spaces for many uncountable groups, e.g. fundamental groups of spaces with arbitrarily small essential loops.  Thus we will define a dimension for groups arising as fundamental groups of spaces with nontrivial local algebraic topology as follows.

\begin{defn}
	The \emph{Peano dimension} of a group $G$ is the minimum covering dimension among aspherical Peano continua with fundamental group $G$, if any such Peano continua exist.  The Peano dimension of $G$ is undefined if there does not exits an aspherical Peano continuum with fundamental group $G$.
\end{defn}


The geometric dimension of a finitely presented group is equal to its cohomological dimension whenever the cohomological dimension is not 2 \cite{EilenbergGanea57,Stallings68,Swan69}.  Thus the fundamental group of the Hawaiian earring has geometric dimension at least 2, cohomological dimension at least 2, Peano dimension 1, and homotopy dimension 1.  Thus homotopy dimension and the Peano dimension create a better matching of spaces to fundamental groups than cohomological dimension and geometric dimension for many locally complicated spaces.


The \emph{homotopy dimension} of a space is the minimum covering dimension among all Hausdorff spaces homotopy equivalent to it.  The classical notion of homotopy dimension is a natural mixture of homotopy and topology and has been studied by several authors, see \cite{cc4, Liem81, Wall65}.  It is an exercise to show that the geometric dimension of a finitely presented group $G$ is the homotopy dimension of any $K(G,1)$ space.



We make the following conjecture relating the Peano dimension of fundamental groups to the homotopy dimension of an aspherical Peano continua with the prescribed fundamental group, which we prove for planar and one-dimensional Peano continua.



\begin{conj}\label{conjecture}
If $X$ is an aspherical Peano continuum with fundamental group $G$, then the Peano dimension of $G$ is the homotopy dimension of $X$.
\end{conj}

\noindent Note the conjecture simply states that any two aspherical Peano continua with isomorphic fundamental groups have the same homotopy dimension.  Thus the homotopy dimension would be an invariant of the fundamental group in this setting.

Both planar spaces and one-dimensional spaces are aspherical \cite{ccz}.  Thus to prove Conjecture \ref{conjecture} for planar or one-dimensional Peano continua, we need only prove the following:



\begin{thm}\label{main}\hspace{3in}
     \begin{enumerate}
        \item A planar Peano continuum with homotopy dimension one is homotopy equivalent to a one-dimensional planar Peano continuum.

     	\item In the class comprised of the union of one-dimensional Peano continua and planar Peano continua, the fundamental group determines the homotopy dimension.
     \end{enumerate}
\end{thm}


Every contractible space has homotopy dimension  zero, and every planar continuum has homotopy dimension at most two. The space obtained by filling one removed square of the Sierpinski carpet gives an example of a planar continuum with homotopy dimension exactly two \cite{ccz}.  In fact, there exist uncountably many non-homotopy equivalent planar continua with homotopy dimension exactly two \cite{Kent18}.  Theorem \ref{main} gives us the following corollary, which answers Question 1.6 of \cite{cc4}.

\begin{cor}Let $X$ be a planar Peano continuum.  Then $X$ has homotopy dimension one if and only if the fundamental group of $X$ is isomorphic to the fundamental group of a non-contractible one-dimensional Peano continuum.
\end{cor}

We will also show that the properties of being homotopically a Peano continuum, homotopically one-dimensional, or homotopically planar are rigid in the following sense.

\begin{thm}\label{main2}
Let $X$ be a topological space.  If $X$ is homotopy equivalent to spaces $Y_1$ and $Y_2$ where $Y_1$ is one-dimensional and $Y_2$ is a Peano continuum, then $X$ is homotopy equivalent to a one-dimensional Peano continuum.  If, in addition, $X$ is homotopy equivalent to a planar set, then $\pi_1(X,x_0)$ is isomorphic to the fundamental group of a one-dimensional planar Peano continuum.
\end{thm}

Cannon and Conner  showed that the fundamental group of a planar Peano continuum always embeds into the fundamental group of a one-dimensional Peano continuum \cite{cc4}.  Theorem \ref{main} shows that Cannon and Conner's embedding can never be surjective, if the planar continuum has homotopy dimension two.  Thus there exists a planar continuum with homotopy dimension two whose fundamental group embeds into the fundamental group of a one-dimensional space.

While the fundamental group determines the topology of the set of points at which a planar or one-dimensional Peano continuum is not  locally simply connected \cite{ce, ConnerKent19},  Theorem \ref{main} shows that there exist planar Peano continua with non-isomorphic fundamental groups, and the set of points at which they are not locally simply connected are homeomorphic (see Proposition \ref{not perfect}).

In a subsequent paper \cite{Kentpreprint2} by the second author, it is proved that in the union of the set of planar Peano continua and the set of one-dimensional Peano continua the fundamental group is a perfect invariant of the homotopy type.  Note that this subsequent result, while related, does not imply Theorem \ref{main}, since Theorem \ref{main} requires one to consider the situation when a planar Peano continuum has the fundamental group of an arbitrarily one-dimensional Hausdorff space, which is not covered in \cite{Kentpreprint2}.  The work in \cite{Kentpreprint2} depends on an extensive and technical study of homotopies of planar sets.  Here, on the other hand,  we give a shorter and independent proof of Theorem \ref{main} that illustrates how the classical Phragm\'{e}n-Brouwer properties, \cite[p. 47]{Wilder49}, can be used to reduce many questions about planar continua to the appropriate questions for one-dimensional continua.  Theorem \ref{main} also holds when $X$ is allowed to be the complement of a discrete set in $\mathbb S^2$ (see Theorem \ref{almostmain}), which does not follow from the results in \cite{Kentpreprint2}.

\section{Codiscrete subsets of $\mathbb S^2$}

Cannon and Conner showed that every planar Peano continuum is homotopy equivalent to a nice subset of the $2$-sphere which can be easier to manipulate than a general planar continuum.  A subset of the $2$-sphere, $\mathbb S^2$, is \emph{codiscrete} if it is the complement of a discrete set.

\begin{thm}[{\cite[Theorem 1.2]{cc4}}]\label{classification}
Every Peano continuum $M$ in the 2-sphere $\mathbb S^2$ is homotopy equivalent to a codiscrete subset $X$ of $\mathbb S^2$. Conversely, every codiscrete subset $X$ of $\mathbb S^2$ is homotopy equivalent to a Peano continuum $M$ in $\mathbb S^2$.
\end{thm}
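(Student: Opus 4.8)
The plan is to prove the two implications separately, in each case producing an explicit deformation retraction between the Peano continuum and the codiscrete set — handled one complementary domain (respectively, one puncture) at a time — and then controlling the global continuity of the glued map by a null-sequence argument.

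I would begin by analyzing the complementary domains of $M$. Since $\mathbb S^2$ is second countable, $\mathbb S^2\setminus M$ has at most countably many components $U_1,U_2,\dots$, each a connected open set whose frontier lies in $M$. Because $M$ is connected, each $\mathbb S^2\setminus U_i$ is connected, so every $U_i$ is simply connected and hence homeomorphic to an open disk. Two classical facts about Peano continua in the sphere are the essential inputs: (a) the diameters form a null sequence, i.e.\ for each $\varepsilon>0$ only finitely many $U_i$ satisfy $\diam(U_i)\ge\varepsilon$; and (b) since $M$ is locally connected, the frontier of each $U_i$ is locally connected, so by the Carath\'eodory theory the Riemann map extends to a continuous surjection of the closed disk onto $\cll(U_i)$ carrying the bounding circle onto $\partial U_i\subseteq M$.

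For the forward direction, I would choose one point $p_i\in U_i$ for each $i$ and set $X=\mathbb S^2\setminus\{p_i:i\}$; this is codiscrete because (a) forces the only possible accumulation points of $\{p_i\}$ to lie in $M\subseteq X$, so $\{p_i\}$ is discrete. The retraction $r\colon X\to M$ is the identity on $M$, and on each punctured domain $\cll(U_i)\setminus\{p_i\}$ it is the radial retraction onto $\partial U_i$ transported through the Carath\'eodory map from the standard radial retraction of the punctured closed disk onto its boundary circle. These pieces agree on the shared frontiers, so they glue. The decisive point is continuity of $r$ at a point $x\in M$ accumulated by infinitely many domains: by (a) any point near $x$ lies either in $M$ or in a domain of tiny diameter, hence is moved only slightly, which gives continuity. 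Sliding $r$ back to the inclusion through the disk coordinates produces the deformation retraction, so $X\simeq M$. For the converse, given codiscrete $X=\mathbb S^2\setminus D$ with $D=\{d_i\}$ discrete, I would choose pairwise disjoint closed round disks $\overline{B_i}$ centered at $d_i$ with radii $\rho_i$ small relative to the spacing of $D$ and with $\rho_i\to 0$, and set $M=\mathbb S^2\setminus\bigcup_i\int(B_i)$. Then $M$ is compact and connected, and the shrinking radii force local connectivity even at accumulation points of $D$ (which lie in $X$), where $M$ is the sphere with a null sequence of tiny disks deleted; thus $M$ is a Peano continuum, and the same radial-retraction-and-glue construction pushing each $\overline{B_i}\setminus\{d_i\}$ onto $\partial B_i$ gives $X\simeq M$.

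In both directions the per-piece radial retraction is routine; the genuine obstacle is the global continuity of the glued map at the accumulation set of the punctures or domains, which is governed entirely by the null-sequence condition — property (a) in the forward direction and the imposed $\rho_i\to 0$ in the converse — together with local connectivity (via Carath\'eodory), which is precisely what makes each individual boundary retraction extend continuously. I expect the most delicate points to be verifying that the Carath\'eodory extension is compatible with the radial retraction along its non-injective boundary fibers, and assembling a single uniform modulus of continuity for $r$ near the accumulation set.
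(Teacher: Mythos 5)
The paper contains no proof of this statement: it is quoted verbatim from Cannon and Conner \cite{cc4}, so the only comparison available is with the cited argument, and your outline is essentially that argument --- puncture each complementary domain of $M$ once and deformation-retract radially in Riemann--Carath\'eodory coordinates, with global continuity supplied by the null-sequence property; conversely, excise a null sequence of pairwise disjoint round disks about the points of $D(X)$. Your classical inputs are the correct ones (complementary domains of a Peano continuum in $\mathbb S^2$ form a null sequence; their boundaries are locally connected by Torhorst's theorem, so the Carath\'eodory extension exists), and the two points you flag as delicate do close: since the extension $\phi_i\colon \overline{\mathbb D}\to \cll(U_i)$ is injective on the open disk and carries $\partial\mathbb D$ onto $\partial U_i$, while the radial retraction fixes $\partial \mathbb D$ pointwise, the transported map is constant on the non-injective boundary fibers and descends through the quotient map $\phi_i$; and no uniform modulus is needed beyond the bound $d\bigl(r(y),y\bigr)\le \diam(U_i)$ for $y\in\cll(U_i)$, which the null sequence makes small near the accumulation set. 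Two details worth writing out rather than asserting: in the converse, take $\rho_i<\frac{1}{3}\,d\bigl(d_i, D(X)\setminus\{d_i\}\bigr)$ as well as $\rho_i\to 0$, which forces the closed disks to be pairwise disjoint and to miss $B(X)$; and connectedness of $M$ needs a line --- for instance, any continuous $h\colon M\to\{0,1\}$ is constant on each boundary circle, hence extends over each excised disk to a map on all of $\mathbb S^2$ (continuous at accumulation points, again by the null sequence), so $h$ is constant, an argument in the same spirit as Lemma \ref{separate}. Modulo these routine fills (and a M\"obius normalization in the degenerate case $\cll(U_i)=\mathbb S^2$), the proposal is sound and faithful to the cited proof.
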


We will use Theorem \ref{classification} and Theorem \ref{almostmain} to prove Theorem \ref{main}.

\begin{defn}

If $X$ is a codiscrete subset of $\mathbb S^2$, let $D(X)$ be the discrete complement of $X$ in $\mathbb S^2$ and $B(X)$ be the set of accumulation points of $D(X)$.  It is immediate that $B(X)$ is the set of points at which $X$ is not semilocally simply connected. For any space $Y$, we will use $B(Y)$ to denote the set of points at which $B(Y)$ is not semilocally simply connected.
\end{defn}

Cannon and Conner were able to prove the following topological characterization of codiscrete subsets of $\mathbb S^2$ with homotopy dimension at most one \cite{cc4}.

\begin{thm}[Cannon \& Conner]\label{chara}

Suppose that $X$ is a codiscrete subset of the two-sphere $\mathbb S^2$. Then $X$ is homotopically at most one-dimensional if
and only if the following two conditions are satisfied.

\begin{enumerate}[(i)]

	\item \label{i}Every component of $\mathbb S^2 \backslash B(X)$ contains a point of $D(X)$.

	\item \label{ii} If $A$ is any closed disk in the two-sphere $\mathbb S^2$, then the components of $A\backslash B(X)$ that do not contain any point of $D(X)$ form a null sequence.
	
\end{enumerate}
	
\end{thm}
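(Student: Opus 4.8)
The plan is to read conditions (i) and (ii) as a description of the ``two-dimensional part'' of $X$, namely the complementary regions of $B(X)$ that contain no puncture, and to prove the two implications separately: a constructive collapse onto a one-dimensional spine when (i) and (ii) hold, and a cohomological obstruction when either fails. I would begin with two reductions. First, $D(X)$ can be dense in no open set: otherwise every point of such a set would lie in $\overline{D(X)}=D(X)\cup B(X)$ while being a limit of $D(X)$, forcing it into $B(X)$, which is impossible since $B(X)\cap D(X)=\varnothing$. Hence $B(X)$ is nowhere dense in $\mathbb S^2$, and so $\dim B(X)\le 1$. Second, since $B(X)$ is exactly the set where $X$ fails to be semilocally simply connected, $X$ is ``graph-like'' off $B(X)$: each component $C$ of $\mathbb S^2\setminus B(X)$ meets $D(X)$ in a relatively closed, discrete set, so $C\setminus D(X)$ is an open planar surface and deformation retracts onto a locally finite spine $G_C$.

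\emph{Sufficiency.} Assuming (i) and (ii), I would put $Y=B(X)\cup\bigcup_C G_C$, the union of these spines over the components $C$ of $\mathbb S^2\setminus B(X)$, with $r\colon X\to Y$ equal to the identity on $B(X)$ and to the chosen retraction $r_C$ on each $C\setminus D(X)$. That $Y$ is one-dimensional follows from the countable sum theorem applied to the nowhere dense $B(X)$ together with the countably many graphs $G_C$. The substance of the argument is to arrange the $G_C$ and $r_C$ so that $r$ is continuous at each $b\in B(X)$ and homotopic to the identity. Condition (i) ensures that no $C$ is a solid region forced to collapse across its entire diameter, since each $C$ carries a puncture toward which its spine can be drawn; condition (ii) provides the uniformity, for given $b\in B(X)$ and $\varepsilon>0$ only finitely many puncture-free pieces meeting a small disk about $b$ have diameter exceeding $\varepsilon$, so the retractions can be chosen to displace points near $b$ by less than $\varepsilon$. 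This yields continuity of $r$ and of the ambient straight-line homotopy $r\simeq\mathrm{id}$ pushed back into $X$, proving $X\simeq Y$ with $\dim Y\le 1$.

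\emph{Necessity.} Here I would argue the contrapositive through a homotopy invariant that vanishes on one-dimensional spaces. Since $X$ is separable metric, Čech cohomology is homotopy invariant and vanishes above the covering dimension, so $X\simeq Y$ with $\dim Y\le 1$ forces $\check H^2(X;\mathbb Z)\cong\check H^2(Y;\mathbb Z)=0$. It therefore suffices to exhibit a nonzero class in $\check H^2(X;\mathbb Z)$ whenever (i) or (ii) fails. If (i) fails, a puncture-free component $C$ is an open subsurface whose frontier lies in $B(X)\subseteq X$, so $\overline C\subseteq X$ is a filled $2$-cell, and excision for the pair $(X,X\setminus C)$ realizes its fundamental class as a nonzero element of $\check H^2(X)$. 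If (ii) fails, infinitely many puncture-free pieces of diameter bounded below accumulate in some disk, and I would assemble their fundamental classes into a nonvanishing element of the inverse limit computing $\check H^2(X)$.

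I expect the main obstacle to be concentrated in a single phenomenon seen from both sides: on the sufficiency side, the uniform choice of spines and retractions making $r$ continuous across $B(X)$ — the step where (ii) is indispensable — and on the necessity side, the (ii)-failure case, where the obstructing class lives in an inverse limit and its nonvanishing must be ruled in. Showing that the null-sequence condition is precisely the dividing line between continuity of the collapse and nontriviality of $\check H^2$ is the heart of the theorem.
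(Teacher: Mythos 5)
The paper does not prove this theorem; it is quoted from Cannon--Conner \cite{cc4}, with only a one-sentence sketch of the idea (use the punctures of $D(X)$ to push the two-dimensional components of $\mathbb S^2\setminus B(X)$ onto a one-dimensional core, with condition (\ref{ii}) supplying enough punctures to make the pushes converge). Your sufficiency argument follows that same idea, and your identification of the hard point --- continuity of the retraction at points of $B(X)$, which is exactly where (\ref{ii}) enters --- is accurate; but that step is only asserted, not carried out, and it is where essentially all of the work in \cite{cc4} lives. One local slip there: a puncture-free component $C$ is an arbitrary open connected subset of $\mathbb S^2$, so $\overline C$ need not be a disc (it can be a Warsaw disc or worse), and a single puncture per component, which is all (\ref{i}) gives you, is not enough to build the spine --- the paper itself points out that puncturing the Warsaw disc at finitely many points does not suffice.

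The necessity direction, however, has a fatal gap: the obstruction you propose is identically zero. By Theorem \ref{classification} every codiscrete $X$ with $D(X)\neq\emptyset$ is homotopy equivalent to a Peano continuum $M$ that is a compact \emph{proper} subset of $\mathbb S^2$, and Alexander duality gives $\check H^2(M;\mathbb Z)\cong \tilde H_{-1}(\mathbb S^2\setminus M;\mathbb Z)=0$; by homotopy invariance $\check H^2(X;\mathbb Z)=0$ as well. So $\check H^2$ cannot detect the failure of (\ref{i}) or (\ref{ii}) --- indeed the filled Sierpinski carpet, the basic example of homotopy dimension two here, has vanishing $\check H^2$. (Your ``filled $2$-cell'' step is also internally wrong: a closed disc is contractible and carries no nonzero absolute $H^2$ class, and excision does not manufacture one.) This is precisely why the theorem is hard: no classical (co)homological invariant sees the two-dimensionality, and the actual proof of necessity must argue through the fundamental group and the combinatorics of null sequences of essential loops, in the spirit of Lemmas \ref{kernel}, \ref{factordendrite}, \ref{part 1} and \ref{part 2} of this paper.
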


When the codiscrete space $X$ satisfies the two conditions of Theorem \ref{chara}, Cannon and Conner actually build a deformation retraction of $X$ onto a one-dimensional subset of $X$ (see \cite[p. 60]{cc4}), which implies the following corollary.

\begin{cor}[Cannon \& Conner]\label{cor: one-dimensional planar}

Suppose that $X$ is a codiscrete subset of the two-sphere $\mathbb S^2$ satisfying both conditions of Theorem \ref{chara}, then $X$ is homotopy equivalent to a one-dimensional planar continuum.
	
\end{cor}

The idea for the proof of Theorem \ref{chara} is to use the \emph{holes} arising from $D(X)$ to push the two-dimensional components of $\mathbb S^2\backslash B(X)$ onto some nice one-dimensional core.  
The problem is that puncturing the Warsaw disc (the closure of the bounded component of complement of the Warsaw circle in the plane) at any finite set of points is insufficient to be able to retract it onto any one-dimensional subset.  Thus condition (\ref{ii}) must be used to guarantee sufficient punctures to build nice retracts.

To prove Theorem \ref{main}, we will use Theorem \ref{continuous} to find a continuous function between the corresponding  codiscrete subsets of the two-sphere which induces an isomorphism of the fundamental groups.  The key step in the proof that is new here is the ability to recognize conditions (\ref{i}) and (\ref{ii}) from the fundamental group.  Section \ref{proof} is dedicated to showing that if a continuous function between a codiscrete sets induces an isomorphism on the fundamental group and factors through a map into a one-dimensional space then the conditions of Theorem \ref{chara} must be satisfied.

\section{Studying codiscrete subsets of $\mathbb S^2$}
The following are standard definitions that we present here to fix notation.

\begin{defn}

Let $B_r^X(x) = \{ y\in X \mid d(x,y)<r\}$ and $S_r^X(x)= \{ y\in X \mid d(x,y) = r\}$.  When no confusion will arise from suppressing the superscript in our notations for balls and spheres, we will do so.  If $X$ is a planar set then $B_r^X(x) = B_r^{\mathbb R^2}(x)\cap X$ and $S_r^X(x) = S_r^{\mathbb R^2}(x)\cap X$. For non-degenerate sets $A\subset X$, we will use $\mathcal N_\epsilon (A) = \{x\in X \mid d(x,a)<\epsilon \text{ for some } a\in A\}$ to denote the  $\epsilon$-neighborhood of  $A$ in $X$.

If $U\subset X$, we will use $\cl_X(U)$ to denote the topological closure of $U$  in $X$.  When $X$ is understood, the closure will be denoted simply by $\cl(U)$.  We will denote the unit circle in the plane by $\mathbb S^1$ and the unit sphere in $\mathbb R^3$ by $\mathbb S^2$.

Let $\alpha: \bigl([0,a], 0,a\bigr) \to \bigl(X, x_0, x_1\bigr)$ be a path.  Then $\overline \alpha: \bigl([0,a], 0,a\bigr) \to \bigl(X, x_1, x_0\bigr)$ is the path defined by $\overline \alpha (t) = \alpha(a-t)$.  A path $\alpha:\bigl([0,a], 0,a\bigr) \to \bigl(X, x_0, x_1\bigr)$ induces a change of base point isomorphism $\widehat \alpha : \pi_1(X, x_0) \to \pi_1(X,x_1)$ defined by $\widehat{\alpha}([s]) = [\overline\alpha* s* \alpha]$.

A space is \emph{locally simply connected at $x$} if every neighborhood of $x$ contains a simply connected neighborhood of $x$.  A space $X$ is \emph{semilocally simply connected at $x$} if there exists a neighborhood $U$ of $x$ such that the inclusion induced homomorphism $i_*:\pi_1(U,x)\to \pi_1(X,x)$ is trivial.  A space is \emph{locally simply connected} (or \emph{semilocally simply connected}) if it is at each its points.

\end{defn}

\begin{defn}
Let $A, B$ be subsets of a topological space $X$.  A closed subset $L$ of $X$ \emph{separates} $A$ and $B$, if there exists disjoint open subsets $U,V$ of $X$ such that $A\subset U$,  $B\subset V$, and $X\backslash L = U\cup V$.
\end{defn}

\begin{thm}\cite[Corollary 4.5.12]{vm} Let $X$ be a  non-empty topological space.  Then the covering dimension of $X$ is at most $1$ if and only if for every pair of disjoint closed subsets $A,B$ of $X$ there exists a closed 0-dimensional subset $L$ of $X$ that separates $A$ from $B$.
 \end{thm}

\begin{lem}[{\cite[Lemma 5.5]{ConnerKent19}}]\label{out}

Suppose that $X$ is a topological space and $x\in X$ has a planar or one-dimensional open neighborhood.  Then every open neighborhood $W$ of $x$ contains an open neighborhood $U$ of $x$ such that no essential loop in $U$ can be freely homotoped out of $W$.

\end{lem}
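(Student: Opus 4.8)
The plan is to prove the contrapositive: for a suitably chosen neighborhood $U \subseteq V$, any loop in $U$ that is freely homotopic in $X$ to a loop lying in $X \setminus V$ is already null-homotopic in $U$. I would start with two reductions. First, if $X$ is locally simply connected at $x$, I take $U$ to be a simply connected neighborhood inside $V$; then $U$ carries no essential loops and the statement is vacuous, so I may assume $X$ is not locally simply connected at $x$. Second, fixing a planar or one-dimensional neighborhood $N$ of $x$ (identified with a subset of $\mathbb R^2$ in the planar case), I replace $V$ by nested round metric balls $U = B_r(x) \subseteq B_{R'}(x) \subseteq B_R(x) =: V_0$ with $\cl(V_0) \subseteq V \cap N$. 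Since $V_0 \subseteq V$ gives $X \setminus V \subseteq X \setminus V_0$, a loop that can be homotoped into $X \setminus V$ can be homotoped into $X \setminus V_0$; hence it suffices to rule out that an essential loop in $U$ is freely homotopic in $X$ to a loop in $X \setminus V_0$. The buffer annulus $V_0 \setminus \cl(U)$ produced by shrinking $U$ is what will separate ``near $x$'' from ``outside $V_0$''.

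The first invariant to try is the winding number: in the planar chart, for each $p \in \mathbb R^2 \setminus N$ the function $w(\cdot, p)$ is a free-homotopy invariant, it is constant on the connected set $V_0$ for any loop avoiding $V_0$, and a loop in $U$ has all its bounded complementary regions inside $V_0$. This already defeats escape for loops linking the holes near $x$ with varying multiplicity. But winding numbers only see homology and are genuinely insufficient: a commutator of two small loops about distinct holes is essential yet has every winding number zero, so it carries no homological obstruction to being pushed out. The engine I would therefore use for both the planar and the one-dimensional case is that the fundamental groups of planar and of one-dimensional Peano continua are \emph{shape injective}: the chart admits an inverse system of finite graphs $P_i$ and projections $q_i$ through which $\pi_1$ injects into $\varprojlim \pi_1(P_i)$. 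In a finite graph, free-homotopy classes are exactly conjugacy classes, each represented by a unique cyclically reduced edge-loop, and the set of edges that loop traverses---its \emph{support}---is a free-homotopy invariant. Thus two loops whose $q_i$-images are nontrivial but supported in edge-disjoint subgraphs cannot be freely homotopic.

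With this in hand the argument is to separate supports. Taking the defining data fine relative to the buffer annulus (in the planar case, retracting onto a finite core after deleting enough complementary points; in the one-dimensional case, the finite graph approximations and their reduced-path calculus), I would arrange that, on the chart, the image of $\cl(U)$ and the image of the frontier region near $\partial V_0$ lie in edge-disjoint subgraphs of some $P_i$. Because $X$ is not locally simply connected at $x$, I can shrink $U$ so that every essential loop $\gamma$ in $U$ has $(q_i)_*[\gamma]$ nontrivial with cyclically reduced support confined to the near-$x$ subgraph. Then an escape homotopy, once confined to the chart, would push $\gamma$ to a loop supported in the disjoint ``far'' subgraph; equality of cyclically reduced forms would force that common support to be empty, so $(q_i)_*[\gamma]$ would be trivial---a contradiction. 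This would yield the required $U$.

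The hard part, and the place where the hypothesis that $x$ has a planar or one-dimensional neighborhood is indispensable, is \textbf{confinement together with faithfulness}. The projection $q_i$ lives on the chart $N$, whereas the free homotopy witnessing an escape may wander far outside $N$; this is exactly why the chart-local winding number is not enough and why the invariant cannot simply be read off globally. I would attack this by truncating the homotopy to the part with image in $\cl(V_0) \cap X$, extracting from an escape a free homotopy \emph{inside the chart} that pushes $\gamma$ out of $V_0'$ to the frontier, and then invoking the support invariant above to see that an interior-essential loop cannot reach the frontier. Making this truncation rigorous---and simultaneously verifying that a small enough $U$ keeps every essential loop genuinely essential with support trapped near $x$, so that it neither dies nor leaks into the far subgraph under $q_i$---is where I expect the bulk of the work to lie, and it must be carried out by two separate constructions for the planar and one-dimensional charts.
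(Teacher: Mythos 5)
There is a genuine gap, and it sits exactly where you yourself locate ``the bulk of the work'': the confinement/truncation step is the entire content of the lemma, and nothing in your outline actually closes it. An escaping free homotopy is a map $h$ of an annulus $A$ into $X$ with one boundary circle going to $\gamma$ and the other to a loop outside $V$; the preimage $h^{-1}\bigl(\cl(V_0)\bigr)$ is just some closed subset of $A$, not a subannulus, so it does not hand you ``a free homotopy inside the chart that pushes $\gamma$ to the frontier.'' Without that, the shape-theoretic support invariant has nothing to act on. A second problem is the sentence ``I can shrink $U$ so that every essential loop $\gamma$ in $U$ has $(q_i)_*[\gamma]$ nontrivial'' for a fixed stage $i$: shape injectivity only gives injectivity into the inverse limit $\varprojlim \pi_1(P_i)$, and any fixed finite stage kills all sufficiently small essential loops (indeed $x\in B(X)$ forces arbitrarily small essential loops in every $U$), so no single $P_i$ can witness essentiality for all loops in $U$, and letting $i$ depend on $\gamma$ undermines the fixed ``edge-disjoint near/far subgraphs'' you need. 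So as written the argument does not go through, and it also imports machinery (shape injectivity of planar and one-dimensional sets) far heavier than the lemma requires.

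The paper's proof resolves the truncation issue by a different and much more elementary device, which you may want to internalize: choose $U$ with $\cl(U)\subset V$ so that each component of $\partial U$ is an arc (possibly degenerate) --- possible because $V$ is planar (take a round ball whose bounding circle meets $X$ in intervals) or one-dimensional (take $U$ with $0$-dimensional boundary). Then $h^{-1}(\partial U)$ is a closed subset of $\int(A)$ separating the two boundary circles, and by the Phragm\'en--Brouwer properties a single \emph{component} $C$ of $h^{-1}(\partial U)$ already separates them. Since $C$ is connected, $h(C)$ lies in one arc of $\partial U$, which is contractible, so by the Tietze extension theorem one can redefine $h$ on the side of $C$ away from $\gamma$ to land in that arc, producing a map of a disk with boundary $\gamma$. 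No approximation by graphs, no winding numbers, and no global control of the homotopy are needed; the only input is the arc structure of $\partial U$ plus Phragm\'en--Brouwer.
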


 This also proves the following well-known fact.

\begin{cor}
A planar or one-dimensional set is semilocally simply connected if and only if it is locally simply connected.
\end{cor}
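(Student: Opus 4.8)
The plan is to prove the pointwise equivalence at each point $x$ of a planar or one-dimensional set $X$ and then globalize, since being (semi)locally simply connected is by definition being so at every point. One implication is immediate: if $X$ is locally simply connected at $x$, then any neighborhood $W$ of $x$ contains a simply connected neighborhood $U$, and because $\pi_1(U,x)$ is trivial the inclusion-induced homomorphism $i_*\colon\pi_1(U,x)\to\pi_1(X,x)$ is trivial, so $X$ is semilocally simply connected at $x$. The content is therefore the converse, and I would establish it by reusing the \emph{construction} in the proof of Lemma \ref{out} rather than its statement: the statement only governs loops that escape $V$, which under the semilocal hypothesis carries no information, so the Phragm\'en--Brouwer/Tietze argument itself must be redeployed.

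Assume $X$ is semilocally simply connected at $x$ and fix a neighborhood $W$. Using the semilocal hypothesis I would first choose a neighborhood $V\subseteq W$ of $x$ such that every loop in $V$ is null-homotopic in $X$; any neighborhood contained in a semilocal witness has this property. Next, exactly as in the first paragraph of the proof of Lemma \ref{out}, I would use the planar or one-dimensional local structure to select $U\subseteq V$ that is either simply connected---in which case we are done---or equal to $B_r^X(x)$ whose boundary meets $X$ in components that are intervals (degenerate in the one-dimensional case).

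It then remains to show that such a $U$ is simply connected. Given a loop $\alpha$ in $U$, the semilocal hypothesis provides a null-homotopy $H\colon D^2\to X$ with $H|_{\partial D^2}=\alpha$. The regions of $D^2$ on which $H$ leaves $U$ are bounded by components of $H^{-1}(\partial U)$; by the same reasoning as in Lemma \ref{out}, each such component maps into a single arc of $\partial U$ since those components are intervals, so by the Tietze Extension Theorem I can redefine $H$ over these regions to map into that arc. This produces a null-homotopy whose image lies in $\cl(U)$, so that every loop in $U$ bounds a disk in $\cl(U)$, and hence $x$ has arbitrarily small simply connected neighborhoods.

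I expect the main obstacle to be precisely this pushing step: whereas Lemma \ref{out} only controls loops that can be homotoped out of $V$, simple connectivity requires upgrading a null-homotopy in the ambient space $X$ to one supported in $U$ itself, so the capping must be carried out inside $U$ rather than merely inside $V$. A secondary technical point is the passage from a null-homotopy in $\cl(U)$ to one in an open simply connected neighborhood, together with the basepoint and path-connectivity bookkeeping needed to interpret ``simply connected''; the interval structure of the nearby spheres $S_\rho^X(x)$ is what lets one arrange these.
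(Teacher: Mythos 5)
Your proposal is correct and follows the paper's approach: the paper gives no independent proof of this corollary, simply observing that the Phragm\'en--Brouwer/Tietze capping argument used to prove Lemma \ref{out} ``also proves'' it, which is precisely the redeployment you carry out on a null-homotopy disk instead of an annulus. Your observation that the \emph{statement} of Lemma \ref{out} carries no information under the semilocal hypothesis (since every small loop already dies in $X$), so that the construction itself must be rerun with the cap landing inside $\cl(U)$, is exactly the right reading of the paper's one-line justification.
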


\begin{lem}\label{complement}
Let $A$ be a closed connected subset of $\mathbb S^2$.  Then each component of $\mathbb S^2\backslash A$ is simply connected.
\end{lem}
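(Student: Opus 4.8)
The plan is to reduce the statement to the purely set-theoretic assertion that the complement of each component is connected, and then invoke the standard characterization of simply connected open subsets of the sphere. Set $O=\mathbb S^2\backslash A$, which is open since $A$ is closed; hence every component $U$ of $O$ is an open, connected, locally path-connected subset of $\mathbb S^2$, and in particular an open planar domain. I would use the well-known fact that such a $U$ is simply connected if and only if its complement $\mathbb S^2\backslash U$ is connected (for instance via Alexander duality, $\tilde H_0(\mathbb S^2\backslash U)\cong\tilde H^1(U)$, together with the fact that an open planar set with trivial first homology has trivial fundamental group). Thus it suffices to prove that $\mathbb S^2\backslash U$ is connected for each component $U$.

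To this end I would first record that, since $U$ is a component of the open set $O$, it is both open and closed in $O$, so its frontier satisfies $\partial U\subseteq\mathbb S^2\backslash O=A$; the same holds for every other component $V$ of $O$. If $O$ has a single component the claim is immediate, since then $\mathbb S^2\backslash U=A$ is connected. Otherwise write $\mathbb S^2\backslash U=A\cup\bigcup_{V\neq U}V$, the union taken over the remaining components. For each such $V$ its frontier $\partial V$ is a nonempty subset of $A$ (nonempty because $V$ is a proper nonempty open subset of the connected space $\mathbb S^2$), so $\cl(V)=V\cup\partial V$ meets $A$. As $A$ and $\cl(V)$ are each connected and $\partial V\subseteq A$ gives $A\cup\cl(V)=A\cup V$, the set $A\cup V$ is connected. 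Every one of these connected sets contains $A$, so their union $\mathbb S^2\backslash U$ is connected, as desired.

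A more hands-on route, closer in spirit to the preceding lemma, is to show directly that every simple closed curve $\gamma$ in $U$ is null-homotopic in $U$: by the Jordan--Schoenflies theorem $\gamma$ splits $\mathbb S^2$ into two open disks $D_1,D_2$, the connected set $A$ lies entirely in one of them, say $D_2$, whence $D_1\subseteq\mathbb S^2\backslash A$; since $D_1$ is connected and accumulates on $\gamma$ while meeting the open neighborhood $U$ of $\gamma$, it lies in $U$, so $\cl(D_1)=D_1\cup\gamma\subseteq U$ is a disk bounding $\gamma$. The delicate point in either approach is the same: passing from the behavior of simple closed curves (or from connectedness of the complement) to genuine triviality of $\pi_1(U)$. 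This is precisely where one needs that $U$ is planar---so that its fundamental group is free and detected by homology---rather than an arbitrary open surface, and I would isolate this as the standard fact cited above rather than reprove it.
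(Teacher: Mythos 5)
Your proposal is correct, but your main route differs from the paper's. The paper argues directly: it replaces an arbitrary loop in the component $U$ by a polygonal loop via a straight-line homotopy, observes (using connectedness of $A$ and the Jordan curve theorem) that every simple closed subpath bounds a disk inside $U$, and then reduces the polygonal loop to a constant by finitely many such excisions -- so the passage from simple closed curves to arbitrary loops is handled by hand, combinatorially. You instead reduce the statement to connectedness of $\mathbb S^2\backslash U$ (your decomposition $\mathbb S^2\backslash U=A\cup\bigcup_{V\neq U}(A\cup V)$, with $\partial V\subseteq A$ forcing each $A\cup V$ to be connected, is correct and tidy) and then invoke the standard characterization that an open connected planar domain is simply connected if and only if its complement in the sphere is connected. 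That characterization conceals exactly the step the paper does explicitly -- namely that triviality on simple closed curves (equivalently, trivial first homology) implies trivial $\pi_1$ for open planar sets -- and you rightly flag this as the delicate point. Your second, ``hands-on'' sketch is essentially the paper's key geometric step (Jordan--Schoenflies plus connectedness of $A$ placing one complementary disk inside $U$), minus the polygonal reduction that the paper uses to upgrade it to all loops. In short: your argument is valid and arguably cleaner, at the cost of importing a heavier standard fact (Alexander duality or freeness of planar fundamental groups) where the paper stays elementary and self-contained.
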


\begin{proof}[Sketch of proof.]
Let $U$ be a component of $\mathbb S^2\backslash A$ and suppose that $\alpha:\mathbb S^1\to U$ is a loop in $U$.  Since $A$ and $\im(\alpha)$ are disjoint compact sets, $\alpha$ is homotopic in $U$ by a straight line homotopy to a polygonal path $\beta$.  Suppose that $\beta'$ is a simple closed subpath of $\beta$.  Since $A$ is connected exactly one of the components of $\mathbb S^2\backslash \im(\beta')$ can intersect $A$ and the component not intersecting $A$ must be contained in $U$ which implies that $\beta'$ is nullhomotopic in $U$.

Since $\beta$ is a polygonal path it can be reduced to the constant path by a finite process of replacing simply closed subpaths by constant paths.  By the previous argument, this process preserves the homotopy class and thus $\beta$ is nullhomotopic in $U$.
\end{proof}

When we say \emph{$A$ separates $B$ in $C$}, we mean that $B\subset C\backslash A$  and $B$ is not contained in a single connected component of $C\backslash A$.

\begin{lem}\label{separate} Let $X$ be a codiscrete subset of $\mathbb S^2$ and $A\subset X\backslash B(X)$  such that $ \cll(A)\cap D(X)= \emptyset$.  Then $\cll(A) = \cl_X(A)$ and if $A$ separates $B(X)$ in $X$ then $A$ separates $D(X)$ in $\mathbb S^2$.
\end{lem}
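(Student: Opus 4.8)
The first assertion is immediate and I would dispose of it in one line: since $X = \mathbb S^2 \setminus D(X)$ and $\cll(A) \cap D(X) = \emptyset$, we have $\cll(A) \subseteq X$, so $\cl_X(A) = \cll(A) \cap X = \cll(A)$. The payoff I want to extract from this is a local picture near $D(X)$. Each $d \in D(X)$ is isolated in $D(X)$ (so $d \notin B(X)$) and misses the closed set $\cll(A)$; hence $d$ has a round ball $B_d$ with $B_d \cap \cll(A) = \emptyset$ and $B_d \cap D(X) = \{d\}$. Then the punctured ball $B_d^{*} := B_d \setminus \{d\}$ is a connected subset of $X \setminus A$, and since $B_d$ itself is connected and disjoint from $A$ it lies in a single component of $\mathbb S^2 \setminus A$, namely the one containing $d$.

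For the separation statement I would argue the contrapositive: assuming $D(X)$ lies in a single component $W$ of $\mathbb S^2 \setminus A$, I will show that $B(X)$ lies in a single component of $X \setminus A$. The heart of the matter is the claim that $W \setminus D(X) = W \cap X$ is connected. One is tempted to say ``a planar domain minus a countable set is connected,'' but $A$ need not be closed, so $W$ need not be open and this shortcut is unavailable; this is the step I expect to be the main obstacle. Instead I would argue directly. Suppose $W \setminus D(X) = P \sqcup Q$ is a separation into relatively clopen pieces. Each $B_d^{*}$ (for $d \in D(X)$) is connected and contained in $W \setminus D(X)$, hence lies entirely in $P$ or in $Q$; sort the points of $D(X)$ accordingly and set $\widetilde P = P \cup \{d : B_d^{*} \subseteq P\}$ and $\widetilde Q$ symmetrically. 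Using the balls $B_d$ as neighborhoods one checks that $\widetilde P$ and $\widetilde Q$ are open in $W$: at a sorted point $d$ the ball $B_d$ itself witnesses openness, and at a point of $P$ any nearby $d$ has its $B_d^{*}$ forced into $P$. Thus $\widetilde P, \widetilde Q$ would separate $W$, contradicting that $W$ is a single component. Hence $W \cap X$ is connected and lies in a single component $C^{*}$ of $X \setminus A$.

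Finally I would attach every point of $B(X)$ to $C^{*}$. Given $b \in B(X)$, choose $d_n \in D(X)$ with $d_n \to b$; since $D(X) \subseteq W$ and each $B_{d_n}$ is a connected subset of $\mathbb S^2 \setminus A$ containing $d_n \in W$, maximality of $W$ gives $B_{d_n} \subseteq W$, so $B_{d_n}^{*} \subseteq W \cap X$. These punctured balls accumulate at $b$, whence $b \in \cll(W \cap X)$. As $b \in X$ and $b \notin A$ (because $A \subseteq X \setminus B(X)$), the set $\{b\} \cup (W \cap X)$ is a connected subset of $X \setminus A$ — a connected set together with one of its limit points — so $b \in C^{*}$. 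Therefore $B(X) \subseteq C^{*}$, a single component, which completes the contrapositive and hence the lemma.
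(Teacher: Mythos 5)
Your proof is correct, but it runs in the opposite direction from the paper's. The paper argues forward: from a separation of $B(X)$ it takes a continuous $h\colon X\setminus A\to\{0,1\}$ non-constant on $B(X)$, extends it to $\bar h\colon \mathbb S^2\setminus A\to\{0,1\}$ by making it constant on a small ball about each $d\in D(X)$ (the same local picture you use: a round ball meeting $D(X)$ only at $d$ and missing $\cll(A)$, whose punctured version is connected and lies in $X\setminus A$), and then concludes $\bar h$ is non-constant on $D(X)$ because $\bar h^{-1}(0)$ and $\bar h^{-1}(1)$ are open and every open set meeting $B(X)$ meets $D(X)$. You instead prove the contrapositive at the level of components: assuming $D(X)$ lies in one component $W$ of $\mathbb S^2\setminus A$, you show $W\cap X$ is connected by sorting the punctures into the two sides of a hypothetical clopen separation (this inner step is essentially the paper's extension argument read backwards and localized to $W$), and then you attach each $b\in B(X)$ to that component as a limit point of $W\cap X$ — a step with no analogue in the paper, which instead exploits that neighborhoods of $B(X)$-points meet $D(X)$. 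What your route buys is fidelity to the paper's stated definition of ``separates'' (non-containment in a single \emph{component}): the paper's opening sentence silently passes from ``$B(X)$ meets two components of $X\setminus A$'' to the existence of a two-valued continuous map non-constant on $B(X)$, i.e.\ from components to quasi-components, whereas in your argument the only clopen separation invoked arises legitimately from the assumed disconnectedness of $W\cap X$. What the paper's route buys is brevity. Both proofs are sound modulo that one remark, and yours is, if anything, the more careful of the two.
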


\begin{proof}
Since $\cll(A)\backslash \cl_X(A) \subset D(X)$ and  $\cll(A)\cap D(X) =\emptyset$, we have $\cll(A) = \cl_X(A)$.

Since $A$ separates $B(X)$ in $X$ there exists a continuous function $h: X \backslash A \to \{0,1\}$ which is non-constant on $B(X)$.  For each $d\in D(X)$ there exists $\epsilon_d>0$ such that  $\bigl\{B^{\mathbb S^2}_{\epsilon_d}(d)\mid d\in D(X) \bigr\}$ is a cover of $D(X)$ by disjoint open balls each of which intersects $D(X)$ at a unique point and is disjoint from $A$.  Thus $h$ is constant on $B^{\mathbb S^2}_{\epsilon_d}(d)\backslash\{d\}$.  Then we can define $\bar h : \mathbb S^2 \backslash A \to \{0,1\}$ by $\bar h(x) = h(x)$ on $X\backslash A$ and making $\bar h$ constant on each ball $B^{\mathbb S^2}_{\epsilon_d}(d)$.  It is immediate that $\bar h$ is continuous.  Since $h$ was non-constant on $B(X)$  so is $\bar h$.  Then $\bar h^{-1}(0)$ and $\bar h^{-1}(1)$ are disjoint open sets and both intersect $B(X)$ which implies they both intersect $D(X)$.  Hence $\bar h$ is non-constant on $D(X)$.
\end{proof}

\begin{lem}\label{simplyclosedcurve}

Let $X$ be a codiscrete subset of $\mathbb S^2$ and $U$ an open subset of $\mathbb S^2$ whose closure is a proper subset of $\mathbb S^2$.  Fix $d\in U$ and $c\in \mathbb S^2\backslash \cll(U)$.  For every $\epsilon$ there exists a simply closed curve in $U\cap \mathcal N_\epsilon (\partial U)\cap X$ which is essential in $\mathbb S^2\backslash \{c,d\}$.

\end{lem}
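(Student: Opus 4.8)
The plan is to realize the desired curve as a polygonal simple closed curve lying in the frontier of a fine grid approximation of $U$ from the inside, and then to slide it slightly to make it avoid the discrete set $D(X)$. Since $\cll(U)$ is a proper subset of $\mathbb S^2$, I regard the given point $c$ as the point at infinity and identify $\mathbb S^2\backslash\{c\}$ with the plane $\mathbb R^2$; because $\cll(U)$ is then a compact subset of $\mathbb R^2$ missing $c$, the set $U$ is bounded and $d\in U\subset\mathbb R^2$. Fix a square grid of mesh $\delta$ on $\mathbb R^2$, chosen generically so that $d$ lies on no grid line, and let $F$ be the union of all closed grid squares contained in $U$. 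As $U$ is bounded, $F$ is a finite union of squares; for $\delta$ small we have $d\in\int(F)$, while $c$ (the point at infinity) is not in $F$. I first check that the frontier $\partial F$ lies in $U\cap\mathcal N_\epsilon(\partial U)$: since $F$ is closed, $\partial F\subset F\subset U$, and each edge of $\partial F$ is shared by a square $\sigma_1\subset U$ and a square $\sigma_2\not\subset U$; being connected and meeting both $U$ and its complement, $\sigma_2$ meets $\partial U$, so the edge lies within $\delta\sqrt2$ of $\partial U$. Choosing $\delta<\epsilon/2$ gives $\partial F\subset U\cap\mathcal N_\epsilon(\partial U)$.

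The heart of the argument is to extract from $\partial F$ a simple closed curve enclosing $d$. Viewing $\partial F$ as the mod $2$ boundary of the $2$–chain $F$, it is a $1$–cycle in the grid graph, and hence an even subgraph, so it decomposes into finitely many edge–disjoint simple closed polygonal curves $J_1,\dots,J_k$. A generic ray from $d$ to infinity meets $\partial F$ transversally and only in edge interiors; since it starts in $\int(F)$ and ends outside $F$, and since crossing an edge of $\partial F$ toggles membership in $F$, it crosses $\partial F$ an odd number of times. Consequently it crosses some $J_i$ an odd number of times, and by the Jordan curve theorem this forces $d$ to lie in the bounded complementary component of $J_i$ while $c=\infty$ lies in the unbounded one. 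Thus $J_i$ is a simple closed curve contained in $U\cap\mathcal N_\epsilon(\partial U)$ separating $c$ from $d$, so by the lemma characterizing essential simple closed curves in the twice–punctured sphere it is essential in $\mathbb S^2\backslash\{c,d\}$.

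It remains to move $J:=J_i$ off $D(X)$ without losing either property. Because $J$ is a polygonal, hence tame, simple closed curve in the open set $W:=U\cap\mathcal N_\epsilon(\partial U)$ and $d,c\notin J$, it has a closed annular neighborhood $A\cong S^1\times[-1,1]$ with $A\subset W$ and $A\cap\{c,d\}=\emptyset$; the parallel copies $J_t=S^1\times\{t\}$ are isotopic to $J$ through $A$ and so each separates $c$ from $d$. The set $D(X)$ is discrete in $\mathbb S^2$, hence countable, so $D(X)\cap A$ meets only countably many of the levels $\{t\}$. Choosing a level $t^*$ with $J_{t^*}\cap D(X)=\emptyset$ produces a simple closed curve in $W\cap X=U\cap\mathcal N_\epsilon(\partial U)\cap X$ that is still essential in $\mathbb S^2\backslash\{c,d\}$, as required.

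The main obstacle is the middle step: producing an honest simple closed curve, rather than merely a separating closed set, that simultaneously encloses $d$ and hugs $\partial U$. The grid device meets both demands at once—the mod $2$ cycle structure of $\partial F$ supplies genuine simple closed curves, while the adjacency analysis of frontier edges confines them to the prescribed $\epsilon$–collar inside $U$—so that the concluding perturbation into $X$ becomes routine.
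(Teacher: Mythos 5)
Your proof is correct, and it takes a genuinely different route from the paper's. The paper first extracts a single component $A$ of $\partial U$ separating $c$ from $d$, lets $V$ be the complementary component of $A$ containing $d$, applies the Riemann Mapping Theorem to identify $V$ with $\mathbb R^2$, and takes the preimage of a large round circle; that curve lies in $\mathcal N_\epsilon(A)$ because the compact set $V\backslash \mathcal N_\epsilon(A)$ has bounded image, and the final adjustment off $D(X)$ is dispatched with a one-line ``perturb near the intersections with $D(X)$.'' Your grid construction replaces both the separation step and the conformal map: the mod $2$ cycle structure of $\partial F$ hands you honest simple closed polygonal curves for free (Veblen decomposition of an even subgraph), the parity count along a generic ray isolates the one enclosing $d$, and containment in $U\cap\mathcal N_\epsilon(\partial U)$ is built in, since $\partial F\subset F\subset U$ and every frontier edge abuts a square meeting $\partial U$ --- indeed your curve visibly lies in $U$ itself, a point the paper's $V\cap\mathcal N_\epsilon(A)$ leaves slightly implicit. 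Your countability argument for sliding the curve off $D(X)$ through levels of a tubular annulus is also more explicit than the paper's perturbation. Two small points worth flagging, neither a gap: choose the mesh $\delta$ small with respect to the spherical metric (stereographic projection is bi-Lipschitz on a compact set containing $\cll(U)$, so this is harmless), and note that the edge-disjoint cycles $J_1,\dots,J_k$ may share vertices, which is fine because each is individually simple and the crossing parity is computed along a ray avoiding all grid vertices.
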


\begin{proof}
Fix $d\in U$ and $c\in \mathbb S^2\backslash \cll( U)$ and $\epsilon>0$.  Let $A$ be a component of $\partial U$ which separates $c,d$.    Let $V$ be the component of $\mathbb S^2\backslash A$ which contains $d$.  By Lemma \ref{separate}, $V$ is simply connected.   After possible passing to a subset of $A$ which still  separates $c$ and $d$, we may assume that $A =\partial V$.  By the Riemann Mapping Theorem there exists a homeomorphism $f: V \to \mathbb R^2$.  We may assume that $f(d) = (0,0)$ and $2\epsilon <d(A, d)$.

Then $C = V\backslash \mathcal N_\epsilon(A)$ is a compact set which contains $d$, hence $f(C)$ is also compact.  We can find an $M>1$ such that $f(C)\subset B_M\bigl((0,0)\bigr)= B$.

Then $f^{-1}(\partial B)$ is a simple closed curve which separates $d,c$ and $f^{-1}(\partial B)\subset \mathcal N_\epsilon (A) \subset \mathcal N_\epsilon (\partial U)$.   Notice that $f^{-1}(\partial B)$ might  intersect $D(X)$.  However after perturbing $f^{-1}(\partial B)$ near its intersections with $D(X)$,  we may also assume that it is contained in both $X$ and $\mathcal N_\epsilon (A)$.  (However, $f^{-1}(\partial B)$ can not necessarily be homotoped off of $B(X)$.)

\end{proof}

We will require the following technical lemma concerning separating components of point preimages.

\begin{lem}\label{kernel}

Suppose that $f: X \to Y$ is a continuous map where $X$ is a codiscrete subset of $\mathbb S^2$.  Fix $y\in Y$ such that $f^{-1}(y)$  separates $D(X)$ in $\mathbb S^2$.    Then either $\cll\bigl(A\bigr)\cap D(X)\neq \emptyset$ for every component $A$ of $f^{-1}(y)$ which separates $D(X)$ in $X$ and is disjoint from $B(X)$ or $f_*$ the induced map on fundamental groups is not injective.

\end{lem}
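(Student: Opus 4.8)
The plan is to prove the contrapositive. Assume $f_*$ is injective and suppose, for contradiction, that some component $A$ of $f^{-1}(y)$ separates $D(X)$ in $X$, is disjoint from $B(X)$, yet satisfies $\cll(A)\cap D(X)=\emptyset$. Since $A$ is a component of the closed set $f^{-1}(y)$ it is closed in $X$, and Lemma \ref{separate} gives $\cll(A)=\cl_X(A)=A$; thus $A$ is a compact continuum lying in $X$ with $f(A)=\{y\}$. As $A$ avoids both $D(X)$ and $B(X)$, it is disjoint from $\cll(D(X))=D(X)\cup B(X)$, so $\delta:=d\bigl(A,\cll(D(X))\bigr)>0$. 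By Lemma \ref{separate}, $A$ separates $D(X)$ in $\mathbb S^2$, so I may fix $d_1,d_2\in D(X)$ lying in distinct components $U_1\ni d_1$ and $U_2\ni d_2$ of $\mathbb S^2\setminus A$; by Lemma \ref{complement} each $U_i$ is simply connected. The goal is now to exhibit a loop that is essential in $X$ but whose $f$-image is nullhomotopic, contradicting injectivity of $f_*$.

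Next I would produce the loop near $A$. Fix $\epsilon$ with $0<\epsilon<\delta$ and let $\phi\colon U_1\to\mathbb R^2$ be a homeomorphism, which exists by the Riemann Mapping Theorem since $U_1$ is simply connected (Lemma \ref{complement}) and proper in $\mathbb S^2$, normalized so that $\phi(d_1)=(0,0)$. Exactly as in the proof of Lemma \ref{simplyclosedcurve}, choosing $M$ large enough that $\phi\bigl(U_1\setminus\mathcal N_\epsilon(A)\bigr)\subset B_M\bigl((0,0)\bigr)$, the simple closed curve $\gamma:=\phi^{-1}(\partial B_M)$ lies in $U_1\cap\mathcal N_\epsilon(A)$ and separates $d_1$ from $d_2$ in $\mathbb S^2$. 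Write $\Delta:=\phi^{-1}(\overline{B_M})$ for the closed disk bounded by $\gamma$ inside $U_1$; since $\epsilon<\delta\le d(A,d_1)$ we have $d_1\in\Delta$, while $d_2\in\mathbb S^2\setminus\Delta$, so each complementary disk of $\gamma$ meets $D(X)$ and hence $\gamma$ is essential in $X$. The complementary collar $R_0:=U_1\setminus\Delta=\phi^{-1}\bigl(\{|z|>M\}\bigr)$ satisfies $R_0\subset\mathcal N_\epsilon(A)$, and since $\epsilon<\delta$ this collar is disjoint from $D(X)$; consequently $\cll(R_0)=R_0\cup\gamma\cup\partial U_1\subset X$ and $f$ is defined on all of $\cll(R_0)$.

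The heart of the argument---and the step I expect to be the main obstacle---is to show that $f\circ\gamma$ bounds a disk in $Y$. The natural null-homotopy would cap $\gamma$ by the disk $\Delta$, but $\Delta$ contains $d_1\in D(X)$, so $f$ is not defined there; instead I would fill $\gamma$ on the $A$-side, where $f$ collapses the (possibly wild) frontier $\partial U_1\subset A$ to the single point $y$. Concretely, view $\mathbb S^2=\mathbb R^2\cup\{\infty\}$ and let $\mathbb D:=\{z:|z|\ge M\}\cup\{\infty\}$, a closed disk with $\partial\mathbb D=\partial B_M$. Define $G\colon\mathbb D\to Y$ by $G=f\circ\phi^{-1}$ on $\mathbb D\setminus\{\infty\}$ and $G(\infty)=y$. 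The only point at which continuity is not immediate is $\infty$: as $|z|\to\infty$ the points $\phi^{-1}(z)$ leave every compact subset of $U_1$ and so accumulate only on $\partial U_1\subset A$; since $A$ is compact and $f(A)=\{y\}$, for every neighborhood $W$ of $y$ the open set $f^{-1}(W)$ contains $\mathcal N_\eta(A)\cap X$ for some $\eta>0$, whence $G(z)=f\phi^{-1}(z)\in W$ once $|z|$ is large. Thus $G$ is continuous, and $G|_{\partial\mathbb D}$ equals $f\circ\gamma$ up to reparametrization, so $f\circ\gamma$ is nullhomotopic in $Y$. Since $\gamma$ is essential in $X$, its based homotopy class is a nontrivial element of $\ker f_*$, contradicting injectivity and completing the proof.
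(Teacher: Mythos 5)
Your proof is correct, and it reaches the same contradiction as the paper --- an essential simple closed curve near $A$ whose $f$-image bounds a disk in $Y$ --- via the same first half: both arguments note $A$ is a closed connected subset of $\mathbb S^2$ disjoint from $\cll\bigl(D(X)\bigr)$, pick a complementary component $U_1$ of $A$ separating two points of $D(X)$, and run the Riemann-map construction of Lemma \ref{simplyclosedcurve} to get the essential curve $\gamma$ in $U_1\cap\mathcal N_\epsilon(A)\subset X$. Where you genuinely diverge is the capping step. The paper takes the honest disk $D\subset\mathbb S^2$ bounded by $\alpha$ and containing $A$, and extends $f\circ\alpha$ over $D$ piecewise: $f$ on the components of $D\setminus A$ lying in $X$, the constant $y$ on the components whose boundary lies in $A$ (which may include components of $\mathbb S^2\setminus A$ containing points of $D(X)$). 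You instead cap with an abstract disk, the one-point compactification of the outer collar $R_0=U_1\setminus\Delta$ realized as $\{|z|\ge M\}\cup\{\infty\}$, sending $\infty$ to $y$. This buys you something concrete: your map is defined only where $f$ already is, and the continuity check is localized to the single point $\infty$, where it follows cleanly from compactness of $A$ and $f(A)=\{y\}$; the paper's piecewise extension instead requires continuity along all of $A$ where infinitely many components of $D\setminus A$ may accumulate, a verification the paper leaves implicit. Two cosmetic remarks: your appeal to Lemma \ref{separate} to pass from ``$A$ separates $D(X)$ in $X$'' to ``$A$ separates $D(X)$ in $\mathbb S^2$'' is not literally licensed by that lemma's statement (which converts separation of $B(X)$ in $X$), though its proof applies verbatim and the paper's own proof of Lemma \ref{kernel} simply assumes separation in $\mathbb S^2$ outright; and you should record that $\gamma$, being freely nullhomotopic in $Y$ after conjugation by a path to the basepoint, yields a nontrivial element of $\ker f_*$ --- which you do. No gap.
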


\begin{proof}
	Fix $y\in Y$ such that $f^{-1}(y)$ separates $D(X)$ in $\mathbb S^2$.  Suppose that $A$ is a component of $f^{-1}(y)$ which separates $D(X)$ in $\mathbb S^2$, is disjoint from $B(X)$, and $\cll\bigl(A\bigr)\cap D(X)= \emptyset$.

		Notice that $\cll\bigl(A\bigr)\cap D(X)= \emptyset$ implies that $ \cll\bigl(A\bigr) = \cl_X(A)$. Since $A$ is a component of $f^{-1}(y)$, it is a maximal connected subset of the closed set $f^{-1}(y)$ which implies that $\cl_X(A) \subset A$.  Hence $A$ is a connected closed subset of $\mathbb S^2$.

	Since $A \cap B(X) = A\cap D(X)= \emptyset$, there exists $\epsilon>0$ such that $\mathcal N_\epsilon (A) \cap D(X) = \emptyset$.  Let  $U$ be a component of $\mathbb S^2\backslash A$ such that $U\cap D(X) \neq \emptyset$ and $\bigl(\mathbb S^2\backslash \cll(U)\bigr)\cap D(X)\neq \emptyset$.  Let $c\in U\cap D(X) $ and $d\in \bigl(\mathbb S^2\backslash \cll(U\bigr)\cap D(X)$. This is possible since $A$ separates $D(X)$ in $\mathbb S^2$.

 By Lemma \ref{simplyclosedcurve}, there exists a simple closed curve $\alpha: \mathbb S^1 \to U\cap\mathcal N_{\epsilon/2} (A)\cap X $ which is homotopically essential in $\mathbb S^2\backslash \{c,d\}$.  Hence $\alpha$ is also homotopically essential in $X$.
	
	Let $D$ be the disc in $\mathbb S^2$ with boundary parameterized by $\alpha$ that does not contain $c$.   By construction, the components of $D\backslash A$ are either contained in $X$ or have boundary contained in $A$. Hence the map $f\circ\alpha:\mathbb S^1 \to Y$ extends to a map $h:D\to Y$ by sending each of the components bounded by $A$ to $y$ and letting $h$ agree with $f$ on the components contained in $X$.  Thus $f\circ\alpha$ extends to a map of the disc and $f_*$ has a nontrivial kernel.

\end{proof}

We will use the following theorem that arbitrary homomorphisms of the fundamental groups of planar and one-dimensional Peano continua are induced by continuous maps.

\begin{thm}[\cite{eda}, \cite{ConnerKent19},\cite{Kent18}]\label{continuous}

Let $X$ and $Y$ be one-dimensional or planar Peano continua and $\phi:\pi_1(X,x_0) \to\pi_1(Y,y_0)$ a homomorphism of their fundamental groups. Then there exists a continuous function $f:X \to Y$ and a path $T:(I,0,1)\to (Y,y_0,y)$ such that $f_* =\widehat T \circ\phi$ where $\widehat T$ is the change of base-point isomorphism induced by $T$.

\end{thm}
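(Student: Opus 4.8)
The plan is to follow Eda's program of reducing a realization statement to the Hawaiian earring case and then bootstrapping via an automatic-continuity principle. First I would fix convenient models. In the planar case I would invoke Theorem \ref{classification} to replace $X$ by a homotopy equivalent codiscrete subset of $\mathbb S^2$, which deformation retracts onto a one-dimensional spine away from its simply connected complementary regions; a one-dimensional Peano continuum is already its own spine. In either case I would equip $X$ with the basepoint $x_0$ together with a countable \emph{null sequence} of loops $\{\ell_i\}$ based at $x_0$, concentrated near the points where $X$ fails to be semilocally simply connected, whose diameters tend to $0$ and which topologically generate $\pi_1(X,x_0)$ in the infinitary sense of Eda's calculus of transfinite words. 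The point of this step is that every element of $\pi_1(X,x_0)$ is then represented by a reduced (possibly transfinite) word in the $\ell_i$, so that $\phi$ is determined by its values on these generators subject to the constraints of the word calculus.

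The algebraic heart of the argument is an automatic-continuity step for the composite homomorphism $\phi$ restricted to the $\ell_i$. The goal is to choose loops $\gamma_i$ in $Y$, based at a common point $y$, representing $\phi([\ell_i])$ and \emph{forming a null sequence}. This is exactly where the noncommutative slenderness of the fundamental groups of one-dimensional and planar Peano continua is used: a homomorphism out of an infinitely-concatenating group must respect infinite products, so it cannot smear a convergent wedge of loops onto a family of loops that fail to shrink. Equivalently, the target group contains no element that is infinitely divisible along a fixed infinite word in a manner incompatible with a convergent wedge, and this forces the chosen $\gamma_i$ to have diameters tending to $0$. This is the content I would extract from \cite{eda,ConnerKentpreprint,Kentpreprint}, and it is where essentially all of the difficulty resides.

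Granting the null sequence $\{\gamma_i\}$, I would build $f$ by first defining it on the one-dimensional spine of $X$: send each $\ell_i$ to $\gamma_i$. Because both families are null sequences and $Y$ is locally path connected, this extends continuously across the shrinking wedge, and the compatibility of $\phi$ with the word calculus guarantees that the extension is well defined on homotopy classes and realizes $\phi$ up to the basepoint shift caused by the common endpoint $y$ of the $\gamma_i$. The path $T\colon(I,0,1)\to(Y,y_0,y)$ is then exactly the arc recording this shift, so that $f_* = \widehat T\circ\phi$. Finally, in the planar case I would extend $f$ over the complementary simply connected regions of the codiscrete model, which is possible precisely because those regions are simply connected and hence impose no obstruction and do not alter the induced homomorphism; in the one-dimensional case no such extension is needed.

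The main obstacle is the automatic-continuity step of the second paragraph: converting the purely algebraic datum $\phi$ into a genuine geometric null sequence $\{\gamma_i\}$ in $Y$. Every other step is formal once the $\gamma_i$ are in hand, but controlling the infinitary behavior of $\phi$ — ruling out the possibility that the images of a convergent wedge of loops fail to converge — requires the full strength of Eda's word-theoretic description of these fundamental groups together with the noncommutative slenderness of the target, and this is the reason the result is assembled from the three cited sources rather than proved directly here.
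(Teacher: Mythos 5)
The paper does not actually prove Theorem \ref{continuous}: it is imported wholesale from \cite{eda}, \cite{ConnerKentpreprint} and \cite{Kentpreprint}, with the three cases ($X$ and $Y$ both one-dimensional; $X$ one-dimensional and $Y$ planar; $X$ planar) attributed to Eda, to Conner and Kent, and to Kent respectively. So there is no in-paper argument to compare yours against, and your proposal has to stand on its own. As written it does not: the load-bearing steps are either deferred back to the very sources being cited or rest on a false structural assumption.

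The concrete gap is in your first reduction. You posit a countable null sequence of loops $\{\ell_i\}$ based at $x_0$ that topologically generates $\pi_1(X,x_0)$ in the infinitary sense, so that $\phi$ is determined by its values on the $\ell_i$ via the word calculus. This is true for the Hawaiian earring and its relatives, where the wild set is a single point, but it fails for general one-dimensional or planar Peano continua: for the Sierpinski carpet or the Menger curve the set $B(X)$ of points at which the space is not semilocally simply connected is an uncountable perfect set, and no null sequence of loops at a single basepoint generates the fundamental group even allowing transfinite words. The cited proofs must instead work pointwise over all of $B(X)$: one first shows that $\phi$ forces a well-defined continuous map on $B(X)$ (images under $\phi$ of small loops near a wild point must accumulate at a single point of $Y$ --- this is where noncommutative slenderness enters), and only then extends over the locally simply connected part; in the planar case the extension over the two-dimensional complementary pieces, with continuity at the accumulation points of those pieces, is the hardest step and is exactly the one you dismiss as imposing ``no obstruction.'' Your second paragraph correctly identifies automatic continuity as the engine, but you explicitly extract that step from the citations rather than proving it, so the proposal is an outline of where the difficulty lives rather than a proof of the theorem.
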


When $X$ and $Y$ are one-dimensional this was proved by Eda.  When $X$ is one-dimensional and $Y$ is planar this was done by Conner and Kent.  When $X$ is planar and $Y$ is one-dimensional or planar this was proved by Kent.

The following lemma is an easy exercise which will be left to the reader.

\begin{lem}\label{factordendrite}

Suppose that $\alpha: \mathbb S^1 \to A\subset \mathbb S^2$ is a continuous function and $f: A\to \mathbb S^2\backslash\{c,d\}$ is a continuous map such that $d\bigl(\alpha(t), f\circ\alpha(t)\bigr)< d\bigl(\alpha(t), \{c,d\}\bigr)$ where the metric on the 2-sphere is the standard CAT(1) metric.    If $f\circ\alpha$ is nullhomotopic, then $\alpha$ is nullhomotopic in $\mathbb S^2\backslash\{c,d\}$.

\end{lem}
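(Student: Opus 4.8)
The plan is to exhibit an explicit free homotopy in $\mathbb S^2\backslash\{c,d\}$ carrying $\alpha$ to $f\circ\alpha$; once this is in hand, a nullhomotopy of $f\circ\alpha$ concatenates with it to give a nullhomotopy of $\alpha$. The natural candidate is the geodesic (``straight-line'') homotopy that slides each point $\alpha(t)$ to $f\circ\alpha(t)$ along the shortest path between them, which is precisely why the statement fixes the round CAT(1) metric on $\mathbb S^2$.

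First I would extract from the hypothesis two facts about each pair $\bigl(\alpha(t), f\circ\alpha(t)\bigr)$. Since $0\le d\bigl(\alpha(t), f\circ\alpha(t)\bigr) < d\bigl(\alpha(t),\{c,d\}\bigr)$, the right-hand side is strictly positive, so $\alpha(t)\notin\{c,d\}$ and $\alpha$ is genuinely a loop in $\mathbb S^2\backslash\{c,d\}$. Moreover, because $c\neq d$ at most one of them is antipodal to $\alpha(t)$, so $d\bigl(\alpha(t),\{c,d\}\bigr)\le\pi$ and hence $d\bigl(\alpha(t), f\circ\alpha(t)\bigr)<\pi$. In the round sphere any two points at distance less than $\pi$ are joined by a unique geodesic that depends continuously on its endpoints, so I can define $H:\mathbb S^1\times[0,1]\to\mathbb S^2$ by letting $H(t,s)$ be the point a fraction $s$ of the way along the unique geodesic from $\alpha(t)$ to $f\circ\alpha(t)$. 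Then $H$ is continuous with $H(\cdot,0)=\alpha$ and $H(\cdot,1)=f\circ\alpha$.

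Next I would verify that $H$ avoids $\{c,d\}$. For fixed $t$, every point on the geodesic from $\alpha(t)$ to $f\circ\alpha(t)$ lies within distance $d\bigl(\alpha(t), f\circ\alpha(t)\bigr)$ of $\alpha(t)$, and by hypothesis this quantity is strictly smaller than both $d\bigl(\alpha(t),c\bigr)$ and $d\bigl(\alpha(t),d\bigr)$; hence neither $c$ nor $d$ can lie on the geodesic. Therefore $H$ maps into $\mathbb S^2\backslash\{c,d\}$ and is a free homotopy there from $\alpha$ to $f\circ\alpha$. Since $f\circ\alpha$ is nullhomotopic in $\mathbb S^2\backslash\{c,d\}$, it follows that $\alpha$ is nullhomotopic in $\mathbb S^2\backslash\{c,d\}$, as claimed.

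The only step needing genuine care is the continuity of the geodesic homotopy, and this is exactly what the CAT(1) hypothesis supplies: uniqueness of geodesics below distance $\pi$ makes the assignment sending a pair $(p,q)$ with $d(p,q)<\pi$ and a parameter $s$ to ``the point $s$ along the geodesic from $p$ to $q$'' a well-defined continuous map. I expect no other obstacle. It is worth emphasizing that the estimate in the hypothesis is doing double duty—the bound $d\bigl(\alpha(t), f\circ\alpha(t)\bigr)<\pi$ keeps us inside the region where geodesics are unique and continuous, while the same inequality keeps those geodesics away from $\{c,d\}$.
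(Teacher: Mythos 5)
Your proof is correct: the geodesic homotopy $H(t,s)$ stays within distance $d\bigl(\alpha(t), f\circ\alpha(t)\bigr)$ of $\alpha(t)$ and hence avoids $\{c,d\}$, giving a free homotopy in $\mathbb S^2\backslash\{c,d\}$ from $\alpha$ to the nullhomotopic loop $f\circ\alpha$. The paper leaves this lemma as an exercise, but your argument is plainly the intended one --- the CAT(1) hypothesis is stated precisely so that geodesics of length less than $\pi$ are unique and vary continuously with their endpoints.
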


\begin{lem}[{\cite[Lemma 2.10]{ConnerKent19}}]\label{conjugate} Let $X$ be a one-dimensional or planar topological space.  Suppose that  $\alpha_n$ and $\beta_n$ are two  null sequences of essential loops based at $x_0$ and $x_1$ respectively.  If $\alpha_n$ is freely homotopic to $\beta_n$ for all $n$, then $x_0 = x_1$.  If $X$ is also a Peano continuum and there exists a loop $\gamma$ such that $\overline\gamma*\alpha_n*\gamma$ is homotopic rel endpoints to $\beta_n$ for all $n$, then $\gamma$ is a nullhomotopic loop.\end{lem}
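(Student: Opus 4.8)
The plan is to establish the two assertions separately, first that $x_0 = x_1$ and then, under the additional hypothesis, that $\gamma$ is nullhomotopic.

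To prove $x_0 = x_1$, I would argue by contradiction using Lemma~\ref{out}. Suppose $x_0 \neq x_1$, and choose a planar or one-dimensional neighborhood $V$ of $x_0$ whose closure does not contain $x_1$. Lemma~\ref{out} then produces a neighborhood $U \subseteq V$ of $x_0$ such that no essential loop in $U$ can be freely homotoped to a loop disjoint from $V$. Since $\{\alpha_n\}$ is a null sequence based at $x_0$, we have $\im(\alpha_n) \subseteq U$ for all large $n$; since $\{\beta_n\}$ is a null sequence based at $x_1 \notin \cl(V)$, we have $\im(\beta_n) \cap V = \emptyset$ for all large $n$. For such an $n$ the essential loop $\alpha_n \subseteq U$ is freely homotopic to $\beta_n$, which is disjoint from $V$; this freely homotopes an essential loop in $U$ out of $V$, contradicting the choice of $U$. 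Hence $x_0 = x_1 =: x$.

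For the second assertion the hypothesis reads $[\gamma][\alpha_n][\gamma]^{-1} = [\beta_n]$ in $\pi_1(X,x)$ for every $n$, and I would show $[\gamma] = 1$ by contradiction. Assume $\gamma$ is essential; then its image is not a single point, so $\gamma$ has a definite positive size away from $x$. The decisive hypothesis is that $\{\beta_n\}$ is a null sequence: I claim that conjugating the shrinking loop $\alpha_n$ by a fixed essential $\gamma$ cannot yield a loop shrinking to $x$, because the part of $\gamma$ lying away from $x$ does not cancel against the tiny loop $\alpha_n$. To make this rigorous I would pass to reduced representatives $\widetilde\gamma, \widetilde\alpha_n, \widetilde\beta_n$, using the normal-form calculus for loops in one-dimensional Peano continua due to Eda and to Cannon and Conner, together with its planar counterpart due to Kent that underlies Theorem~\ref{continuous}. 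The reduced representative of $\gamma * \alpha_n * \overline\gamma$ is obtained from $\widetilde\gamma * \widetilde\alpha_n * \overline{\widetilde\gamma}$ by cancellation localized at the two junctions, and it must coincide with the reduced representative $\widetilde\beta_n$ of $\beta_n$.

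Since a reduced representative does not increase the size of a loop, $\diam(\widetilde\beta_n) \to 0$. On the other hand, for all large $n$ the loop $\alpha_n$, and hence $\widetilde\alpha_n$, lies in a ball about $x$ so small that no subpath of $\widetilde\alpha_n$ can cancel the portion of $\widetilde\gamma$ lying outside that ball; that portion therefore survives the reduction and persists in $\widetilde\beta_n$, forcing $\diam(\widetilde\beta_n)$ to be bounded below by a positive constant depending only on $\gamma$. This contradicts $\diam(\widetilde\beta_n) \to 0$ unless $\widetilde\gamma$ is constant, i.e.\ $\gamma$ is nullhomotopic. I expect the technical heart, and the main obstacle, to be controlling this cancellation rigorously: one must define, in a basepoint-free and homotopy-invariant way, the ``part of $\gamma$ away from $x$'' and prove that it survives the reduction, and this is considerably more delicate in the planar case, where both the reduced normal form and the assertion that reduced representatives do not increase size require the full strength of the planar word calculus rather than the transparent combinatorics available for one-dimensional continua.
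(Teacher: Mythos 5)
Your first step, deducing $x_0=x_1$ from Lemma~\ref{out}, is exactly the paper's argument and is fine. The second step, however, has a genuine gap. You reduce everything to a cancellation statement in a ``reduced normal form'' calculus: that the reduced representative of $\gamma*\alpha_n*\overline\gamma$ retains the part of $\widetilde\gamma$ lying outside a small ball about $x$, so that $\diam(\widetilde\beta_n)$ is bounded below. You correctly identify this as the technical heart, but you do not supply it, and it is not something that can be waved through. Even in the one-dimensional case, where unique reduced representatives do exist (Eda, Cannon--Conner), the cancellations at the two junctions of $\widetilde\gamma*\widetilde\alpha_n*\overline{\widetilde\gamma}$ can interact: after the middle factor is partially or wholly absorbed, the two remnants of $\widetilde\gamma$ can cancel against each other, and one must also handle reduced but cyclically unreduced $\gamma$ (e.g.\ $\gamma$ of the form $c*w*\overline c$ with $w$ a loop far from $x$), where the ``part of $\gamma$ away from $x$'' is not obviously forced to survive into $\widetilde\beta_n$. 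More seriously, in the planar case there is no established unique-reduced-representative calculus of the kind your argument needs; the word-calculus underlying Theorem~\ref{continuous} does not hand you ``reduced representatives do not increase size'' together with ``reduction of a concatenation is localized at the junctions'' as usable black boxes. As written, the proof is complete only in the case you did not need the new machinery for, and incomplete precisely where the lemma is hard.

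The paper avoids all of this with a covering-space argument that treats both cases uniformly: choose $\epsilon>0$ so that $[\gamma]$ remains essential in $X_\epsilon=\mathcal N_\epsilon(X)$ (this survival of essential loops in a neighborhood is the known input, from \cite{cc3} in the one-dimensional case and \cite{FischerZastrow2005} in the planar case), let $K=\ker\bigl(i_*:\pi_1(X,x_0)\to\pi_1(X_\epsilon,x_0)\bigr)$, and take the covering $\rho:\tilde X\to X$ with $\im(\rho_*)=K$, which exists because $K$ is an open normal subgroup. Then $\gamma$ lifts to a path between two distinct lifts of $x_0$, while $\alpha_n$ and $\beta_n$ lift to loops for large $n$ (being null sequences, they eventually lie in $K$); injectivity of $\rho_*$ turns the conjugation hypothesis into a free homotopy between $\tilde\alpha_n$ and $\tilde\beta_n$ based at distinct points of the (locally planar or one-dimensional) cover, which contradicts Lemma~\ref{out} exactly as in your first step. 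If you want to salvage your approach, you would need to either prove the localized-cancellation statement in both settings or restrict to the one-dimensional case and cite the reduced-path theory in detail; otherwise the covering-space route is the one that closes the argument.
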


\begin{prop}\label{inner}
Let $X$ be a one-dimensional or planar Peano continuum and fix $x_0\in B(X)$.  If $\phi: \pi_1(X,x_0) \to \pi_1(X,x_0)$ is a nontrivial inner automorphism, then $\phi$ is not induced by a continuous function.
\end{prop}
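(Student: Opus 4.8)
The plan is to assume the conclusion fails and derive a contradiction from Lemma \ref{conjugate}. Write the nontrivial inner automorphism as conjugation by a class $[\gamma]$, where $\gamma$ is a loop based at $x_0$ with $[\gamma]\neq 1$, so that $\phi([s]) = [\gamma * s * \overline\gamma]$ for every loop $s$ based at $x_0$. Suppose toward a contradiction that $\phi$ is induced by a continuous function; this means there is a continuous $f\colon X\to X$ with $f(x_0)=x_0$ and $f_* = \phi$. Since $x_0\in B(X)$, the space $X$ fails to be semilocally simply connected at $x_0$, so for each $n$ the ball $B_{1/n}(x_0)$ contains a loop $\alpha_n$ based at $x_0$ that is essential in $X$. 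The loops $\alpha_n$ then form a null sequence of essential loops based at $x_0$.

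Next I would transport this null sequence through $f$. Set $\beta_n = f\circ\alpha_n$; since $f(x_0)=x_0$, each $\beta_n$ is a loop based at $x_0$, and because $f_*=\phi$ we have $[\beta_n] = \phi([\alpha_n]) = [\gamma * \alpha_n * \overline\gamma]$. In particular $[\beta_n]$ is a conjugate of the nontrivial class $[\alpha_n]$, so $\beta_n$ is essential, and $\alpha_n$ and $\beta_n$ are freely homotopic for every $n$. The key point is that $\{\beta_n\}$ is again a null sequence: every $\alpha_n$ passes through $x_0$ and has diameter tending to $0$, so the images $\im(\alpha_n)$ shrink to $x_0$, and continuity of $f$ at $x_0$ forces $\diam\bigl(\im(\beta_n)\bigr)\to 0$ as well. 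I expect this null-sequence claim to be the only delicate step; note that it requires nothing beyond continuity of $f$ at the single point $x_0$, so compactness of $X$ is not needed.

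Finally I would feed this data into Lemma \ref{conjugate} with the common basepoint $x_0 = x_1$. The hypotheses are met: $\alpha_n$ and $\beta_n$ are null sequences of essential loops based at $x_0$ that are freely homotopic for all $n$, and the fixed loop $\gamma$ satisfies that $\gamma * \alpha_n * \overline\gamma$ is homotopic rel endpoints to $\beta_n$ for every $n$. The lemma then concludes that $\gamma$ is nullhomotopic, i.e. $[\gamma]=1$, so $\phi$ is the identity automorphism. This contradicts the assumption that $\phi$ is a nontrivial inner automorphism, completing the proof.
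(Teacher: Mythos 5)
Your proposal is correct and follows essentially the same route as the paper: take a null sequence of essential loops at $x_0$ (which exists since $x_0\in B(X)$), observe that their images under $f$ form a null sequence of essential loops conjugate to the originals via $\gamma$, and invoke Lemma \ref{conjugate} to conclude $\gamma$ is nullhomotopic. Your added justification that $f\circ\alpha_n$ is again a null sequence (via continuity of $f$ at $x_0$) is a detail the paper leaves implicit, but the argument is the same.
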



\begin{proof}
Suppose that there exists a continuous function $f: X\to X$ such that $f_*: \pi_1(X,x_0) \to \pi_1(X,x_0)$ is an inner automorphism.  Then there exists a loop $\gamma$ at $x_0$ such that $f_*([s]) = \hat\gamma\bigl([s]\bigr) = [\overline\gamma * s* \gamma]$ for all loops $s$ based at $x_0$.  Since $x_0\in B(X)$, there exists a null sequence of essential loops $\alpha_n$ at $x_0$.  Then $\alpha_n$ and $f\circ \alpha_n$ are null sequences of loops which are conjugate by $\gamma$;  hence Lemma \ref{conjugate} implies that $\gamma$ is nullhomotopic.  Thus $f_*$ is the identity homomorphism.

\end{proof}

\begin{lem}\label{homotopy equivalence}Let $\bar X$ be a one-dimensional or planar Peano continuum and $X$ a planar or one-dimensional topological space.  Suppose that $k:X\to \bar X$ is a homotopy equivalence with homotopy inverse $h:\bar X\to X$.  Then $k\circ h|_{B(\bar X)} = id_{B(\bar X)}$ and, for every $x\in B(\bar X)$, $(k\circ h)_*= id_{\pi_1\bigl(\bar X, k(x)\bigr)}$.  Similarly $h\circ k|_{B( X)} = id_{B( X)}$ and, for every $x\in B(X)$, $(h\circ k)_*=id_{\pi_1\bigl( X, x\bigr)}$.

\end{lem}

\begin{proof}Let $Y$ be any planar or one-dimensional space and $y_0\in B(Y)$.  Suppose that $H: Y\times I \to Y$ is a continuous map such that $H|_{Y\times\{0\}}$ is the identity on $Y$.  Then $H(y_0,t) = y_0$ for all $t$ by Lemma \ref{conjugate}.  Thus  $k\circ h|_{B(\bar X)} = id_{B(\bar X)}$ and $h\circ k|_{B( X)} = id_{B( X)}$.

Then \cite[Lemma 1.19]{Hatcher} together with Lemma \ref{conjugate} imply that $(k\circ h)_*$ and $(h\circ k)_*$ are  the identity automorphisms on $\pi_1\bigl(\bar X, k(x)\bigr)$ and  $\pi_1\bigl(X, x\bigr)$, respectively. (In the notation of \cite{Hatcher}, the change of basepoint isomorphism $\beta_h$ of Lemma 1.19 must be the constant path by Lemma \ref{conjugate}.)
\end{proof}

\begin{lem}\label{inner2}Let $\bar X$ be a one-dimensional or planar Peano continuum and $X$ a planar or one-dimensional topological space.  Suppose that $k:X\to \bar X$ and $h:\bar X\to X$ are homotopy equivalences.  If $x\in B(X)$ and $\phi: \pi_1(X, x) \to \pi_1(X,x)$ is an inner automorphism, then $k_*\circ \phi\circ h_*$ is an inner automorphism of $\pi_1\bigr(\bar X, k(x)\bigl).$  If, in addition, $k_*\circ\phi\circ h_*$ is the identity on $\pi_1\bigl(\bar X, k(x)\bigr)$, then $\phi$ is the identity homomorphism on $\pi_1(X, x)$.
\end{lem}

\begin{proof}If $\phi$ is an inner automorphism, then there exists $[\gamma]\in \pi_1(X,x)$ such that $\phi([s]) = \hat\gamma\bigl([s]\bigr) = [\overline\gamma * s* \gamma]$ for all loops $s$ based at $x$.  For any loop $\alpha$ based at $k(x)$, we have that $k\circ h\circ \alpha$ is homotopic rel endpoints to $\alpha$ and $ k_*\circ \phi\circ h_*\bigl([\alpha]\bigr) =k_*\circ \phi \bigl([h\circ \alpha]\bigr) = k_*\bigl( [\overline\gamma * (h\circ \alpha)* \gamma]= \bigl[(k\circ \overline\gamma) * (k\circ h\circ \alpha)* (k \circ\gamma)\bigr] = \bigl[(\overline{k\circ \gamma} )*  \alpha*( k \circ\gamma)\bigr]$.  Thus $ k_*\circ \phi\circ h_*$ is an inner automorphism.

Suppose that, in addition to $\phi$ being an inner automorphism,  $k_*\circ\phi\circ h_*$ is the identity on $\pi_1\bigl(\bar X, k(x)\bigr)$.  Then for any loop $\alpha$ based at $k(x)$, we have that $ [\alpha]= k_*\circ \phi\circ h_*\bigl([\alpha]\bigr) = \bigl[(\overline{k\circ \gamma} )*  \alpha* (k \circ\gamma)\bigr]$.  Lemma \ref{conjugate} implies that $k\circ \gamma$ is nullhomotopic.  Since $k$ is a homotopy equivalence, $k_*$ is injective and $\gamma$ must by nullhomotopic.  Thus $\phi$ is the identity homomorphism.
 \end{proof}

\begin{lem}\label{induced inverses}
If $X,Y$ are planar or one-dimensional Peano continua with isomorphic fundamental groups then there exit continuous maps $f: X \to Y$ and $g: Y\to X$ such that $g\circ f(x_0) = x_0$  for some $x_0\in X$ and $(g\circ f)_*= id_{\pi_1(X,x_0)}$.
\end{lem}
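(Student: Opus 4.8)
The plan is to realize both $\phi$ and $\phi^{-1}$ by continuous maps via Theorem \ref{continuous}, compose them, and then exploit the wildness of a suitably chosen basepoint together with Lemma \ref{conjugate} to show that the unavoidable change-of-basepoint path produced by Theorem \ref{continuous} is actually trivial.

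First I would dispose of the semilocally simply connected case. If $B(X)=\emptyset$, then by the corollary to Lemma \ref{out} the space $X$ is locally simply connected, hence (being a planar or one-dimensional Peano continuum) homotopy equivalent to a finite wedge of circles; for such spaces homomorphisms of the finitely generated free fundamental group are realized by basepoint-preserving maps with no change-of-basepoint correction, so the conclusion is classical. I therefore assume $B(X)\neq\emptyset$ and fix a basepoint $x_0\in B(X)$ together with an isomorphism $\phi:\pi_1(X,x_0)\to\pi_1(Y,y_0)$ (the basepoint of $Y$ being irrelevant up to conjugation).

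Next I would apply Theorem \ref{continuous} twice. Applied to $\phi$ it gives a map $f:X\to Y$ and a path $T:(I,0,1)\to(Y,y_0,y_1)$ with $f(x_0)=y_1$ and $f_*=\widehat T\circ\phi$. Applied to the homomorphism $\phi^{-1}\circ\widehat{\overline T}:\pi_1(Y,y_1)\to\pi_1(X,x_0)$ it gives a map $g:Y\to X$ and a path $S:(I,0,1)\to(X,x_0,x_1)$ with $g(y_1)=x_1$ and $g_*=\widehat S\circ\phi^{-1}\circ\widehat{\overline T}$. Composing and using $\widehat{\overline T}\circ\widehat T=id$ yields
\[
(g\circ f)_*=\widehat S\circ\phi^{-1}\circ\widehat{\overline T}\circ\widehat T\circ\phi=\widehat S,
\]
so $g\circ f$ carries $x_0$ to $x_1$ and induces the change-of-basepoint isomorphism $\widehat S:\pi_1(X,x_0)\to\pi_1(X,x_1)$.

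The key step, and the main obstacle, is to upgrade this to an honest identity: a priori $x_1\neq x_0$ and $\widehat S$ is merely conjugation along $S$, and since a single wild point need not be a neighborhood deformation retract there is no homotopy-extension argument available to push $x_1$ back to $x_0$. To circumvent this I would use that $x_0\in B(X)$: choose a null sequence of essential loops $\alpha_n$ based at $x_0$ and set $\beta_n=(g\circ f)\circ\alpha_n$. Because $g\circ f$ is uniformly continuous the loops $\beta_n$ form a null sequence based at $x_1=(g\circ f)(x_0)$, and because $\widehat S$ is an isomorphism each $\beta_n$ is essential with $[\beta_n]=[\overline S*\alpha_n*S]$; in particular $\alpha_n$ is freely homotopic to $\beta_n$ by sliding along $S$. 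Lemma \ref{conjugate} then forces $x_1=x_0$, so that $S$ is a loop at $x_0$, and its conjugation clause (with conjugating loop $S$) forces $S$ to be nullhomotopic. Hence $\widehat S=id_{\pi_1(X,x_0)}$ and $(g\circ f)(x_0)=x_1=x_0$, exactly the desired conclusion. Alternatively, once $x_1=x_0$ is established one may finish by invoking Proposition \ref{inner} directly, since $g\circ f$ is then a continuous self-map of $X$ fixing the wild point $x_0$ and inducing the inner automorphism $\widehat S$, which that proposition shows must be trivial.
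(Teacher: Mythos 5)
Your proposal is correct and follows essentially the same route as the paper: apply Theorem \ref{continuous} twice, observe that $(g\circ f)_*$ is a change-of-basepoint isomorphism, and use a null sequence of essential loops at the wild point $x_0$ together with Lemma \ref{conjugate} (or, as the paper does, Proposition \ref{inner}) to kill the basepoint-change path. The only differences are cosmetic: you carry the paths $T$ and $S$ explicitly where the paper absorbs $T$ into the choice of $y_0$.
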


\begin{proof}A planar or one-dimensional Peano continua is locally simply connected if and only if it has countable fundamental group, see \cite[Theorem 3.1]{ConnerLamoreaux2005} in the planar case and \cite[Theorem 5.9]{cc3} in the one-dimensional case.  Thus if $B(X)= \emptyset$, then $B(Y) = \emptyset$ and both $X$ and $Y$ are homotopy equivalent to a finite wedge of circles in which case the lemma is standard.

Thus we need only concern ourselves with the case that $B(X) \neq\emptyset$.  Fix $x_0\in B(X)$.   By Theorem \ref{continuous} there exists a continuous function $f:( X,x_0)\to (Y, y_0)$ such that $f_{*}$ is an isomorphism.   We can apply Theorem \ref{continuous} again to find continuous maps $\alpha: (I,0,1) \to ( X, x_0, x_1)$ and $g: Y\to  X$ such that
$f_{*}^{-1} =  \widehat{\overline\alpha}\circ g_{*}$.  Then $\widehat{\alpha} = (g\circ f)_*: \pi_1(X, x_0) \to \pi_1(X, x_1)$ is a change of basepoint isomorphism.  Hence $(g\circ f)_*$ defines an isomorphism from $\pi_1(X,x)$ to $\pi_1(X, g\circ f(x))$ for all $x\in X$.

Let $s_n: I \to X$ be a null sequence of essential loops based at $x_0$.   Notice that $\overline \alpha* s_n* \alpha$ is homotopic to $g\circ f\circ s_n$ and freely homotopic to $s_n$. Applying Lemma \ref{conjugate} to $g\circ f\circ s_n$ and $s_n$  gives that $x_1= x_0$.  Thus $\alpha$ is a loop based at $x_0$ and $(g\circ f)_*$ is an inner automorphism.  Lemma \ref{inner} implies that $(g\circ f)_*$ is the identity on $\pi_1(X,x_0)$ as desired.

\end{proof}

The following is a technical lemma from Cannon and Conner's proof of Theorem \ref{chara} that is a slight strengthening of condition (\ref{ii}), see \cite[Lemma 4.3]{cc4}.

 \begin{lem}[Annulus Lemma]\label{annulus}

 Suppose that $X$ is a codiscrete subset of the 2-sphere for which condition (\ref{ii}) of Theorem \ref{chara} fails.  Then there exists a closed annulus $A$ in $\mathbb S^2$ and infinitely many components $U_1,U_2, \cdots$ of $A \backslash B(X)$ such that $U_i$ intersect both boundary components of $A$ and $U_i \cap D(X) = \emptyset$ for all $i\in \mathbb N$.

 \end{lem}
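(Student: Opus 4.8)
The plan is to extract the annulus directly from the failure of condition (\ref{ii}). By hypothesis there is some closed disk $D \subset \mathbb S^2$ for which the components of $D \backslash B(X)$ that miss $D(X)$ do \emph{not} form a null sequence. First I would unwind what this non-null behavior gives us: there is an $\epsilon > 0$ and an infinite collection of such components $V_1, V_2, \dots$, each of diameter at least $\epsilon$ and each disjoint from $D(X)$. Passing to a subsequence, I would use compactness of $D$ to arrange that these components accumulate; concretely, choose a point $p_i \in V_i$ and, after passing to a convergent subsequence, let $p_i \to p \in D$. Since each $V_i$ has diameter at least $\epsilon$, I would also select a second point $q_i \in V_i$ with $d(p_i, q_i) \geq \epsilon/2$ and, passing to a further subsequence, arrange $q_i \to q$ with $d(p,q) \geq \epsilon/2$, so the limit points $p \neq q$ are genuinely separated.

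The geometric heart is to produce the annulus whose two boundary circles are straddled by infinitely many of the $U_i$. The natural candidate is a thin round annulus $A$ centered at $p$: take $A$ to be the closure of $B_R^{\mathbb S^2}(p) \backslash B_r^{\mathbb S^2}(p)$ for radii $0 < r < R$ chosen so that the inner boundary circle $S_r(p)$ and the outer boundary circle $S_R(p)$ both lie at a definite distance from $p$ but with $R$ small enough that $q \notin B_R(p)$. The key point is that for all large $i$, the point $p_i$ is close to $p$ (hence inside $B_r(p)$ once $i$ is large) while $q_i$ is close to $q$ (hence outside $B_R(p)$), so each $V_i$ meets both the inside of the inner circle and the outside of the outer circle. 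Because $V_i$ is connected, $V_i$ must therefore cross the whole annulus $A$ and meet both boundary components of $A$. Defining $U_i := V_i \cap \int(A)$ — or more carefully a component of $V_i \cap A$ that reaches both boundary circles — gives the required infinite family, and since $V_i \cap D(X) = \emptyset$ we automatically have $U_i \cap D(X) = \emptyset$.

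There are two technical points I would have to be careful about. First, I want the $U_i$ to be components of $A \backslash B(X)$, not merely of $D \backslash B(X)$ intersected with $A$; since $A \subset D$ (after possibly shrinking $R$ so that $B_R(p) \subset D$, using that $p$ is interior — or handling the boundary case separately), a component of $A \backslash B(X)$ containing a crossing arc of $V_i$ is contained in the ambient component $V_i$ and hence still misses $D(X)$, so this is safe. Second, I must ensure the radii $r, R$ can be chosen so that $S_r(p)$ and $S_R(p)$ behave well; since only countably many radii can be "bad," almost every choice works, and this causes no real difficulty.

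The main obstacle I expect is the \textbf{case where the limit point $p$ lies on the boundary of $D$}, or more subtly, ensuring that the accumulating components genuinely cross a \emph{fixed} annulus rather than shrinking annuli. The first issue is handled by noting that $B(X)$ and the failure of (\ref{ii}) are robust under replacing $D$ by a slightly larger closed disk containing a neighborhood of $p$, or by simply choosing the accumulation point $p$ to be interior, which one can do because the components have diameter bounded below and thus cannot all accumulate only on $\partial D$. The second and more serious issue — guaranteeing that infinitely many $V_i$ meet \emph{both} a fixed inner and a fixed outer circle — is exactly what the lower diameter bound $\epsilon$ buys us: it is what prevents the $V_i$ from collapsing to $p$, forcing each to reach out past radius $R$ and thereby cross any fixed thin annulus straddling $p$. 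Verifying this crossing rigorously, via connectedness of $V_i$ together with the intermediate-value behavior of the distance function $d(\cdot, p)$ on $V_i$, is the step that requires the most care.
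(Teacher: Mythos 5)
The paper itself does not prove this lemma (it is imported from Cannon and Conner's work), so your argument has to stand on its own. Its core construction is the natural one and is correct in the main case: extract infinitely many pairwise disjoint components $V_1,V_2,\dots$ of $D\backslash B(X)$, each missing $D(X)$ and of diameter at least $\epsilon$; choose $p_i,q_i\in V_i$ with $d(p_i,q_i)\geq\epsilon/2$ and pass to limits $p\neq q$; take a thin round annulus $A$ separating $p$ from $q$. If $p\in\int(D)$, then choosing $R<\min\bigl\{d(p,\partial D),d(p,q)\bigr\}$ forces $A\subset\int(D)$, and then the component $U_i$ of $A\backslash B(X)$ containing a crossing arc of $V_i$ (produced by your intermediate-value argument along a path from $p_i$ to $q_i$) is a connected subset of $D\backslash B(X)$ meeting $V_i$, hence contained in $V_i$. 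That single containment delivers everything: $U_i\cap D(X)=\emptyset$ and the pairwise distinctness of the $U_i$.

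The genuine gap is the boundary case, which you flag but do not resolve. First, your claim that the components ``cannot all accumulate only on $\partial D$'' because their diameters are bounded below is false: infinitely many pairwise disjoint components of diameter at least $\epsilon$ can all be thin slivers contained in $\mathcal N_{\delta_i}(\partial D)$ with $\delta_i\to 0$, shadowing long arcs of the boundary circle, in which case every accumulation point of every choice of $p_i\in V_i$ lies on $\partial D$. Second, your fallback that the failure of condition (\ref{ii}) is robust under enlarging $D$ to a slightly larger disk $D'$ is unjustified: a component $V_i$ that meets $\partial D\backslash B(X)$ grows, inside $D'\backslash B(X)$, across $\partial D$ into the collar $D'\backslash D$, where it may absorb points of $D(X)$ and may merge with $V_j$ for $j\neq i$, destroying the witnesses. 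Third, once $p\in\partial D$ the problem cannot be waved away: no annulus separating $p$ from $q$ can be contained in $D$ (the complementary component containing $p$ would be an open subset of $\int(D)$ containing the boundary point $p$), so $A\not\subset D$, and then a component of $A\backslash B(X)$ containing a crossing arc of $V_i$ can escape $D$ through $\partial D\backslash B(X)$; both the conclusion $U_i\cap D(X)=\emptyset$ and the distinctness of the $U_i$ genuinely fail at that point. A complete proof must treat this case separately --- for instance, a component $V_i$ with $\cll(V_i)\cap\partial D=\emptyset$ has $\partial V_i\subset B(X)$ and is therefore a component of $\mathbb S^2\backslash B(X)$, for which any annulus works without reference to $D$; but the components whose closures meet $\partial D$ require an additional argument that your proposal does not supply.
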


\section{Proof of Theorem \ref{almostmain}}\label{proof}

\begin{thm}\label{almostmain}

Let $X$ be a codiscrete subset of the 2-sphere, $\mathbb S^2$. Then $X$ is homotopy equivalent to a one-dimensional Peano continuum if and only if $\pi_1(X,x_0)$ is isomorphic to the fundamental group of a one-dimensional planar Peano continuum.

\end{thm}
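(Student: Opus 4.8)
The plan is to prove the two implications separately, with essentially all the content in the ``if'' direction. The ``only if'' direction is immediate: if $X$ is homotopy equivalent to a one-dimensional Peano continuum $W$, then $\pi_1(X,x_0)\cong\pi_1(W,w_0)$ and there is nothing to prove. So assume $\pi_1(X,x_0)$ is isomorphic to $\pi_1(W,w_0)$ for some one-dimensional Peano continuum $W$. By Theorem \ref{chara} it suffices to verify conditions (\ref{i}) and (\ref{ii}); I would show that if either fails then no continuous map from $X$ into $W$ can induce an injection on fundamental groups, contradicting the existence of the isomorphism together with Theorem \ref{continuous}. When $B(X)=\emptyset$ the group $\pi_1(X,x_0)$ is countable, $X$ is homotopy equivalent to a finite wedge of circles, and the statement is trivial (as in the proof of Lemma \ref{induced inverses}), so I assume $B(X)\neq\emptyset$; I also use that the one-dimensional model produced by Theorem \ref{chara} may be taken to be a Peano continuum, as in Cannon and Conner's construction.

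First I would manufacture the map to work with. Since $X$ is homotopy equivalent to a planar Peano continuum $M$ by Theorem \ref{classification}, and $\pi_1(M)\cong\pi_1(W)$, Theorem \ref{continuous} (or Lemma \ref{induced inverses}) provides a continuous $h\colon M\to W$ inducing the isomorphism up to a change of basepoint; composing with the homotopy equivalence $X\to M$ yields a continuous $f\colon X\to W$ with $f_*$ injective. The point of routing through $W$ is that the target is one-dimensional, so $f$ can be fed into Lemma \ref{kernel}: whenever $f^{-1}(y)$ separates $D(X)$ in $\mathbb S^2$, injectivity of $f_*$ forces every component $A$ of $f^{-1}(y)$ that separates $D(X)$ and is disjoint from $B(X)$ to satisfy $\cll(A)\cap D(X)\neq\emptyset$.

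The engine is therefore the following dichotomy: a failure of (\ref{i}) or (\ref{ii}) should produce a fiber component $A$ that separates $D(X)$, is disjoint from $B(X)$, and yet has $\cll(A)\cap D(X)=\emptyset$---exactly the forbidden configuration. The clean region supplied by the failure is what makes such an $A$ available. For (\ref{i}), a component $V$ of $\mathbb S^2\setminus B(X)$ with $V\cap D(X)=\emptyset$ lies in $X\setminus B(X)$ and has $\partial V\subset B(X)$, so $D(X)$ accumulates on its boundary; any fiber component of $f|_V$ that separates $B(X)$ in $X$ automatically has closure disjoint from $D(X)$ (since $\cll(V)\cap D(X)\subset B(X)\cap D(X)=\emptyset$), and by Lemma \ref{separate} it then separates $D(X)$ in $\mathbb S^2$. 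For (\ref{ii}), I would invoke the Annulus Lemma \ref{annulus} to obtain an annulus $A$ and infinitely many spanning components $U_1,U_2,\dots$ of $A\setminus B(X)$, each contained in $X$; because $W$ is one-dimensional, the restriction of $f$ to the two-dimensional annulus must possess a fiber whose component cuts across one of the fat (non-null) strips $U_i$, separating the two ends of the annulus. Such a component lies in a single $U_i$, hence is disjoint from $B(X)$ with closure missing $D(X)$, while punctures lie on either side of the strip; Lemma \ref{separate} again upgrades this to separation of $D(X)$ in $\mathbb S^2$.

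The main obstacle is the dimension-reduction step: showing that a map from a two-dimensional region (the component $V$, or the annulus of spanning strips) into the one-dimensional $W$ is forced to have a fiber component that genuinely separates $D(X)$ while avoiding $B(X)$ and keeping its closure off $D(X)$. This is where the one-dimensionality of the target and the separation theory behind Lemma \ref{out} (the Phragm\'{e}n--Brouwer properties) do the real work, and where the non-null hypothesis in the Annulus Lemma is essential---it guarantees that a strip of definite size survives to host a separating fiber, rather than the fibers merely accumulating on $B(X)$. Once such an $A$ is produced in either case, Lemma \ref{kernel} yields $\ker f_*\neq 1$, the desired contradiction; hence conditions (\ref{i}) and (\ref{ii}) hold and Theorem \ref{chara} shows $X$ is homotopically at most one-dimensional, i.e.\ homotopy equivalent to a one-dimensional Peano continuum.
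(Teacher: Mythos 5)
There is a genuine gap: your argument only ever uses the one--way map $f\colon X\to W$ with $f_*$ injective and tries to derive every contradiction from Lemma \ref{kernel}, i.e.\ from a fiber component that separates $D(X)$. That strategy provably cannot cover the case in which the offending region is \emph{simply connected}. If $U$ is a simply connected component of $\mathbb S^2\backslash B(X)$ with $U\cap D(X)=\emptyset$, then $\mathbb S^2\backslash U$ is connected and contains all of $D(X)$, so \emph{no} subset of $U$ --- in particular no fiber component of $f|_U$ --- can separate $D(X)$ in $\mathbb S^2$; likewise a spanning strip $U_i$ of the annulus may become simply connected after trimming near $\partial A$, and then it has only one boundary component in $B(X)$, so there are no two closed sets $f(D_1), f(D_2)$ to separate by a $0$-dimensional set in $W$ and the separating fiber component you posit simply does not exist. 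Your phrase ``the restriction of $f$ to the two-dimensional annulus must possess a fiber whose component cuts across one of the fat strips'' is exactly the step that fails here: dimension theory gives you fibers that locally disconnect the strip, but not ones that separate $D(X)$ in $\mathbb S^2$, which is what Lemma \ref{kernel} requires.

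The paper handles precisely this case with different machinery, and it is the part of the argument your proposal omits: one needs the map in the \emph{other} direction as well. Lemma \ref{inverse} produces $g\colon Y\to X$ with $(g\circ f)_*=\mathrm{id}$, Lemmas \ref{fix} and \ref{identity} show $g\circ f$ fixes $B(X)$ pointwise, and uniform continuity on the compact closure of the bad region then forces $d\bigl(x,g\circ f(x)\bigr)$ to be small for $x$ near $\partial U\subset B(X)$. Taking the essential simple closed curve $\alpha$ of Lemma \ref{simplyclosedcurve} close to $\partial U$, one observes that $\alpha$ bounds a disk in $X$, so $f\circ\alpha$ is nullhomotopic in the one-dimensional $Y$ (it factors through a dendrite), hence so is $g\circ f\circ\alpha$; Lemma \ref{factordendrite} then contradicts the essentiality of $\alpha$ in $\mathbb S^2\backslash\{x_1,d\}$. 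Your treatment of the non--simply--connected subcases (the $0$-dimensional separator $L$ in $W$, Lemma \ref{separate}, Lemma \ref{kernel}) matches the paper and is fine, but without the $g\circ f$ argument the proof is incomplete in both Lemma \ref{part 1} and Lemma \ref{part 2}.
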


This section will be dedicated to the proof of Theorem \ref{almostmain}.  If $X$ is homotopy equivalent to a one-dimensional Peano continuum, then any homotopy equivalence induces an isomorphism of fundamental groups.  Thus we need only prove that if the fundamental group of a codiscrete set $X$ is isomorphic to the fundamental group of a one-dimensional Peano continuum then $X$ is homotopy equivalent to a one-dimensional Peano continuum.

For the remainder of the section, we will fix  a codiscrete subset $X$ of $\mathbb S^2$ and a one-dimensional Peano continuum $Y$ such that $\pi_1(X,x)$ is isomorphic to $\pi_1(Y,y)$.  Note that if $X$ is locally simply connected, then $X$ is a finitely punctured sphere and is homotopy equivalent to a bouquet of finitely many circles.  Thus the theorem is trivial if $X$ is locally simply connected.  Thus we may assume that $X$ is not locally simply connected and fix $x_0\in B(X)$.

\begin{lem}\label{inverse}

There exists continuous maps $f: X \to Y$ and $g: Y\to X$ such that $g\circ f(x_0) = x_0$  and $(g\circ f)_*= id_{\pi_1(X,x_0)}$.

\end{lem}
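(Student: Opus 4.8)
The plan is to reduce Lemma~\ref{inverse} to the already-established Lemma~\ref{induced inverses} by recognizing that its hypotheses are exactly satisfied in the present setting. Recall that we are working under the standing assumptions of the proof of Theorem~\ref{almostmain}: $X$ is a codiscrete subset of $\mathbb S^2$ (hence a planar set), $Y$ is a one-dimensional Peano continuum, and $\pi_1(X,x)\cong\pi_1(Y,y)$. First I would observe that both $X$ and $Y$ are planar or one-dimensional Peano continua with isomorphic fundamental groups, so Lemma~\ref{induced inverses} applies directly to the pair $(X,Y)$, producing continuous maps $f\colon X\to Y$ and $g\colon Y\to X$ together with a basepoint $x_1\in X$ satisfying $g\circ f(x_1)=x_1$ and $(g\circ f)_*=\mathrm{id}_{\pi_1(X,x_1)}$.

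The one point that requires attention is the placement of the basepoint: the conclusion of Lemma~\ref{induced inverses} gives \emph{some} fixed point $x_1$, whereas Lemma~\ref{inverse} asks specifically for the distinguished basepoint $x_0\in B(X)$ chosen at the start of the subsection. The key observation here is that $x_0$ was fixed precisely because $X$ is not locally simply connected, so $B(X)\neq\emptyset$ and $x_0\in B(X)$. I would therefore revisit the proof of Lemma~\ref{induced inverses} and note that, when $B(X)\neq\emptyset$, its argument in fact starts by fixing an arbitrary point of $B(X)$ and producing the identity on the fundamental group based there; taking that point to be our $x_0$ yields $g\circ f(x_0)=x_0$ and $(g\circ f)_*=\mathrm{id}_{\pi_1(X,x_0)}$ on the nose.

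I expect the main (and essentially only) obstacle to be bookkeeping rather than mathematics: one must confirm that the chain of applications of Theorem~\ref{continuous}, Lemma~\ref{conjugate}, and Proposition~\ref{inner} inside the proof of Lemma~\ref{induced inverses} can be run with the prescribed basepoint $x_0\in B(X)$, and that the free-homotopy argument forcing $x_1=x_0$ (via the null sequence of essential loops at $x_0$, which exists precisely because $x_0\in B(X)$) goes through unchanged. Since $x_0\in B(X)$ is exactly the hypothesis under which that argument was carried out, there is nothing new to verify. Thus the proof amounts to invoking Lemma~\ref{induced inverses} with $X$ and $Y$ as given and with the basepoint taken to be the already-fixed $x_0$, and then recording the resulting maps $f$ and $g$.
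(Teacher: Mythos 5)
Your proposal founders on a point it treats as automatic: you assert that ``both $X$ and $Y$ are planar or one-dimensional Peano continua,'' but $X$ is not a Peano continuum. In the standing setup for Theorem \ref{almostmain}, $X$ is a codiscrete subset of $\mathbb S^2$, i.e.\ the complement of a discrete set $D(X)$; whenever $D(X)$ is infinite, $X$ is not compact (a compact subset of $\mathbb S^2$ would force $D(X)$ to be open, which a nonempty discrete set cannot be). Lemma \ref{induced inverses} is stated for planar or one-dimensional \emph{Peano continua}, and its proof rests on Theorem \ref{continuous}, which is likewise only available for Peano continua. So the direct application of Lemma \ref{induced inverses} to the pair $(X,Y)$ is not justified, and this is precisely the reason Lemma \ref{inverse} is not a one-line citation of Lemma \ref{induced inverses}.

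The paper closes this gap using Theorem \ref{classification}: choose a planar Peano continuum $\bar X$ and homotopy equivalences $h\colon \bar X\to X$, $k\colon X\to\bar X$, apply Lemma \ref{induced inverses} to the pair $(\bar X, Y)$ at the basepoint $k(x_0)$ to get $f_1, g_1$ with $(g_1\circ f_1)_* = \mathrm{id}$, and then take $f = f_1\circ k$ and $g = h\circ g_1$, so that $(h\circ g_1\circ f_1\circ k)_* = h_*\circ k_* = \mathrm{id}_{\pi_1(X,x_0)}$. Your discussion of the basepoint is essentially sound --- the proof of Lemma \ref{induced inverses} does begin by fixing an arbitrary point of $B(\bar X)$, so one can arrange the fixed point to be $k(x_0)$ --- but that bookkeeping only becomes relevant after the reduction to a genuine Peano continuum has been made.
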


\begin{proof}

By Theorem \ref{classification} exists $\bar X$ a planar Peano continuum and homotopy equivalences $h: \bar X \to X$ and $k:X \to \bar X$. Since $x_0\in B(X)$, $h\circ k(x_0) = x_0$ by Lemma \ref{homotopy equivalence}.

By Lemma \ref{induced inverses}, there exists maps $f_1: \bar X \to Y$ and $g_1: Y \to \bar X$ such that $g_1\circ f_1 \bigl(k(x_0)\bigr) = k(x_0)$ and $(g_1\circ f_1)_* = id_{\pi_1\bigl(\bar X, k(x_0)\bigr)}$.

Then $h\circ g_1\circ f_1\circ k(x_0) = x_0$ and $(h\circ g_1\circ f_1\circ k)_* = h_*\circ g_{1*}\circ f_{1*}\circ k_* = h_*\circ k_* $.  By Lemma \ref{homotopy equivalence} $(h\circ k)_*= id_{\pi_1(X,x_0)}$.  So $f_1\circ k$ and $h\circ g_1$ are the desired maps.
\end{proof}

Using the notation from Lemma \ref{inverse}, we will now fix $f_1\circ k = f: X \to Y$ and $h\circ g_1 =g: Y\to X$ such that $g\circ f(x_0) = x_0$  and $(g\circ f)_*= id_{\pi_1(X,x_0)}$ where $h: \bar X \to X$ and $k:X \to \bar X$ are homotopy equivalences of $X$ with a planar Peano continuum.

\begin{lem}\label{identity}

 If $x\in B(X)$, then  $g\circ f (x) = x$, any path $\alpha: (I,0,1) \to (X,x_0, x)$ is homotopic rel endpoints to $g\circ f\circ \alpha $, and $g\circ f$ induces the identity homomorphism on $\pi_1(X,x)$.

 \end{lem}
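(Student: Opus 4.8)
The plan is to prove Lemma~\ref{identity} by exploiting the fact, established in Lemma~\ref{inverse}, that $g\circ f$ fixes the basepoint $x_0\in B(X)$ and induces the identity on $\pi_1(X,x_0)$. The three assertions are closely linked: once I know $g\circ f$ fixes every point of $B(X)$ and drags every path into a homotopic one, the statement about $\pi_1(X,x)$ will follow from a change-of-basepoint argument. First I would observe that $(g\circ f)_*=\mathrm{id}$ is in particular the identity inner automorphism of $\pi_1(X,x_0)$, so Lemma~\ref{fix} applies directly and gives that $g\circ f$ fixes $B(X)$ pointwise. This settles the first claim $g\circ f(x)=x$ for all $x\in B(X)$.

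Next I would establish the path claim. Fix $x\in B(X)$ and a path $\alpha:(I,0,1)\to(X,x_0,x)$; I want $\alpha\simeq g\circ f\circ\alpha$ rel endpoints (note both have the same endpoints $x_0,x$ precisely because $g\circ f$ fixes both points). The natural approach is to use a null sequence of essential loops $\beta_n$ based at $x$, which exists since $x\in B(X)$. Consider the loops $\alpha*\beta_n*\overline\alpha$ based at $x_0$; these form a null sequence of essential loops at $x_0$. Since $(g\circ f)_*$ is the identity on $\pi_1(X,x_0)$, we have $g\circ f\circ(\alpha*\beta_n*\overline\alpha)\simeq \alpha*\beta_n*\overline\alpha$ rel endpoints. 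Setting $\delta=(g\circ f\circ\alpha)*\overline\alpha$, a loop based at $x_0$, one rearranges this to show $\overline\delta*(g\circ f\circ\beta_n)*\delta$ is homotopic rel endpoints to $g\circ f\circ\beta_n$ conjugated appropriately, and that $g\circ f\circ\beta_n$ is freely homotopic to $\beta_n$. Applying Lemma~\ref{conjugate} to the null sequences $\beta_n$ and $g\circ f\circ\beta_n$ then forces $\delta$ to be nullhomotopic, which is exactly the statement $g\circ f\circ\alpha\simeq\alpha$ rel endpoints.

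For the final claim, that $g\circ f$ induces the identity on $\pi_1(X,x)$, I would use the path claim as a bridge. For any loop $s$ based at $x$, the path claim gives $g\circ f\circ\alpha\simeq\alpha$ rel endpoints, and the functoriality of $g\circ f$ together with $(g\circ f)_*=\mathrm{id}$ on $\pi_1(X,x_0)$ lets me transport the identity statement across the change-of-basepoint isomorphism $\widehat\alpha$. Concretely, $(g\circ f)_*$ on $\pi_1(X,x)$ is conjugate via $\widehat\alpha$ to $(g\circ f)_*$ on $\pi_1(X,x_0)$ up to the discrepancy measured by $\delta=(g\circ f\circ\alpha)*\overline\alpha$; since $\delta$ is nullhomotopic, this discrepancy vanishes and $(g\circ f)_*=\mathrm{id}$ on $\pi_1(X,x)$.

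I expect the main obstacle to be the bookkeeping in the path claim: carefully tracking basepoints and verifying that the loops $\alpha*\beta_n*\overline\alpha$ and their images under $g\circ f$ really are conjugate by the single fixed loop $\delta$ for all $n$ simultaneously, so that Lemma~\ref{conjugate} can be invoked. The free-homotopy and rel-endpoints bookkeeping must be done uniformly in $n$, since Lemma~\ref{conjugate} requires the conjugating loop $\gamma$ to be independent of $n$; getting the algebra to line up so that $\delta$ plays this role is the delicate step, whereas the first and third claims are comparatively formal once the path claim is in hand.
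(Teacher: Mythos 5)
Your proposal is correct and follows essentially the same route as the paper: Lemma \ref{fix} gives $g\circ f(x)=x$, and the discrepancy loop $(g\circ f\circ\overline\alpha)*\alpha$ is killed by applying the null-sequence conjugation argument (Lemma \ref{conjugate}) to essential loops based at $x$. The only difference is packaging: the paper first observes that $(g\circ f)_*$ is an inner automorphism of $\pi_1(X,x)$ and cites Proposition \ref{inner} as a black box, whereas you unwind that proposition and apply Lemma \ref{conjugate} directly (note your $\delta$ should be based at $x$ rather than $x_0$ for that application, but this is exactly the bookkeeping you flag and it does not affect the argument).
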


\begin{proof}
Fix $x\in B(X)$ and a path  $\alpha: (I,0,1) \to (X,x_0, x)$. By Lemma \ref{conjugate}, we have that  $g\circ f(x) = x$. Let $s: I \to X$ be an essential loop based at $x$.  Since $(g\circ f)_*$ is the identity on $\pi_1(X, x_0)$ we have $(g\circ f\circ\alpha)* (g\circ f \circ s) * (g\circ f\circ\overline \alpha)$ is homotopic  rel endpoints to  $\alpha*s*\overline \alpha$.  Hence $g\circ f\circ s$ is homotopic rel endpoints to $(g\circ f\circ\overline\alpha)*\alpha*s*\overline \alpha*(g\circ f\circ\alpha)$. Thus $(g\circ f)_*$ defines an inner automorphism of $\pi_1(X, x)$, which by Lemma \ref{inner2} implies that $ k_*\circ (g\circ f)_*\circ h_*$ is an inner automorphism.  Since $ k_*\circ (g\circ f)_*\circ h_*$ is induced by a continuous function, it is the identity on $\pi_1\bigl(\bar X, k(x)\bigr)$ by Lemma \ref{inner2}.  By  Lemma \ref{inner}, $g\circ f$ induces the identity homomorphism on $\pi_1(X,x)$.  Hence $g\circ f\circ\overline\alpha*\alpha$ is nullhomotopic and $g\circ f\circ\alpha$ is homotopic rel endpoints to $\alpha$.

\end{proof}

\begin{lem}\label{part 1}

The codiscrete space $X$ satisfies condition (\ref{i}) of Theorem \ref{chara}.

\end{lem}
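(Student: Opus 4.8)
```latex
The plan is to prove the contrapositive structure implicitly: I want to show that condition (\ref{i}) holds, namely that every component of $\mathbb S^2\backslash B(X)$ contains a point of $D(X)$. Suppose toward a contradiction that there is a component $W$ of $\mathbb S^2\backslash B(X)$ with $W\cap D(X)=\emptyset$. The idea is to exploit the factorization $g\circ f$ through the one-dimensional continuum $Y$ to produce an essential loop in $X$ whose image under $f$ becomes inessential in $Y$, contradicting the fact that $(g\circ f)_*$ is the identity (hence injective, so $f_*$ is injective).

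First I would locate an essential simple closed curve inside $W$ that sits near $B(X)$. Since $W$ is a component of $\mathbb S^2\backslash B(X)$ and $B(X)$ is closed, $B(X)$ meets the complement on both sides in the relevant sense; more precisely I would use Lemma \ref{simplyclosedcurve} applied to a suitable open set $U$ (a component of the complement of a separating piece of $B(X)$) to produce, for every $\epsilon$, a simple closed curve $\alpha$ lying in $W\cap \mathcal N_\epsilon(\partial U)\cap X$ that is essential in $\mathbb S^2\backslash\{c,d\}$ for chosen points $c,d\in D(X)$ lying on opposite sides of $B(X)$. Such points exist because $D(X)$ accumulates on $B(X)$ (that is what $B(X)$ is), so both complementary regions contain points of $D(X)$. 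This $\alpha$ is then essential in $X$.

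Next I would analyze the map $f:X\to Y$ on this curve. Because $Y$ is one-dimensional, $f\circ\alpha$ is a loop in a one-dimensional space, and the key leverage is Lemma \ref{kernel}: I would consider a point $y\in Y$ whose preimage $f^{-1}(y)$ separates $D(X)$ in $\mathbb S^2$, and the component $A$ of that preimage lying in $W$ (hence disjoint from $B(X)$). Since $W\cap D(X)=\emptyset$ we have $\cll(A)\cap D(X)=\emptyset$, and Lemma \ref{kernel} then forces $f_*$ to have nontrivial kernel. But $f_*$ is injective because $(g\circ f)_*=g_*\circ f_*$ is the identity on $\pi_1(X,x_0)$. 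This is the contradiction. The main work is arranging that some point-preimage of $f$ actually separates $D(X)$ in $\mathbb S^2$: I would argue that a separating simple closed curve in $W\cap X$, essential in $\mathbb S^2\backslash\{c,d\}$, must map under $f$ into $Y$ in a way that its image is contained in a one-dimensional set, and then use a separation argument (Phragmén--Brouwer, as in Lemma \ref{out}) inside $W$ to extract a separating component of the appropriate point-preimage.

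I expect the main obstacle to be the bookkeeping that connects "the curve $\alpha$ separates $D(X)$ in $\mathbb S^2$" with "some point-preimage $f^{-1}(y)$ separates $D(X)$ in $\mathbb S^2$" so that Lemma \ref{kernel} applies cleanly. The delicate point is that $\alpha$ separates $c$ from $d$ by construction, and $\alpha$ lies inside $W$, so the separating component $A$ of $f^{-1}(y)$ can be chosen inside $W$ and is therefore disjoint from $B(X)$ with $\cll(A)\cap D(X)=\emptyset$; verifying that this component genuinely separates $D(X)$ in $\mathbb S^2$ (and not merely in $X$) is where Lemma \ref{separate} enters, translating separation of $B(X)$ in $X$ into separation of $D(X)$ in $\mathbb S^2$. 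Once that translation is in hand, Lemma \ref{kernel} delivers the kernel element and the injectivity of $f_*$ completes the contradiction.
```
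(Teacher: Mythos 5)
Your strategy works for the case where the offending component $U$ of $\mathbb S^2\backslash B(X)$ is \emph{not} simply connected, and there it essentially coincides with the paper's argument: split $\partial U\subset B(X)$ into two disjoint closed pieces, use one-dimensionality of $Y$ to get a $0$-dimensional separator $L$ of their $f$-images, take a component $A$ of $f^{-1}(L)$ lying in $U$ (so $f$ is constant on $A$, making $A$ a component of a point-preimage), and feed $A$ through Lemma \ref{separate} and Lemma \ref{kernel} to contradict injectivity of $f_*$. You are right that Lemma \ref{separate} is the bridge from separating $B(X)$ in $X$ to separating $D(X)$ in $\mathbb S^2$.

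The genuine gap is that your single mechanism cannot handle the case where $U$ is simply connected, and that case cannot be waved away. If $U$ is a simply connected component of $\mathbb S^2\backslash B(X)$ with $U\cap D(X)=\emptyset$, then $\mathbb S^2\backslash U$ is connected and contains all of $D(X)$; consequently no connected set contained in $U$ (in particular no simple closed curve in $U$ and no component of $f^{-1}(y)$ lying in $U$) can separate $D(X)$ in $\mathbb S^2$. So there are no points $c,d\in D(X)$ ``on opposite sides,'' the hypothesis of Lemma \ref{kernel} that $f^{-1}(y)$ separates $D(X)$ can never be arranged, and your contradiction evaporates exactly in this case. The paper uses a different argument here: it picks an interior point $x_1\in U$ (not a point of $D(X)$) and a single $d\in D(X)$, produces via Lemma \ref{simplyclosedcurve} a simple closed curve $\alpha$ in $U\cap\mathcal N_\delta(\partial U)$ essential in $\mathbb S^2\backslash\{x_1,d\}$, and observes that $\alpha$ bounds a disc inside $X$ (since $U$ is simply connected and misses $D(X)$), so $f\circ\alpha$ is nullhomotopic and factors through a dendrite, as does $g\circ f\circ\alpha$. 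Because $g\circ f$ fixes $B(X)$ pointwise and is uniformly continuous on $\cl_X(U)$, points of $\alpha$ (which lie $\delta$-close to $\partial U\subset B(X)$) move under $g\circ f$ by less than their distance to $\{x_1,d\}$, and Lemma \ref{factordendrite} then forces $\alpha$ to be nullhomotopic in $\mathbb S^2\backslash\{x_1,d\}$, a contradiction. You need this second, separate argument (or something equivalent) to close the proof.
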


\begin{proof}
Suppose there exists a component $U$ of $\mathbb S^2 \backslash B(X)$ such that $U \cap D(X) =\emptyset$.  Notice that this implies that $\cl_X(U)$ is compact.

\textbf{Case 1: $U$ is not simply connected.} Then $U$ has at least two distinct boundary components.  Let $B(X) = D_1\cup D_2$ where $D_1,D_2$ are disjoint nonempty closed sets and $\partial U$ intersects both sets.

Then $f(D_1),f(D_2)$ are disjoint closed subsets of a one-dimensional Peano continuum.  Hence there exists a $0$-dimensional subspace $L$ of $Y$ which separates $f(D_1),f(D_2)$.  Let $C_i$ be a boundary component of $U$ contained in $D_i$ for $i= 1,2$.

Since $\cl(U)$ is connected any set which separates boundary components of $U$ must intersect $U$.  Since $f^{-1}(L)$ is disjoint from $\partial U$ any component of $f^{-1}(L)$ which intersects $U$ must be contained in $U$.  Therefore there exists a component $A$ of $f^{-1}(L)$ which separates $C_1, C_2$ and  is contained in $U$.

By hypothesis $U\cap D(X) = \emptyset$ and $\partial U\subset B(X)$ which implies that $\cll(A) \cap D(x)=\emptyset$.  Then Lemma \ref{separate} implies that $A$ separates $D(X)$ in $\mathbb S^2$.

Since $L$ is $0$-dimensional and $A$ is connected, $f$ is constant on $A$ and Lemma \ref{kernel} implies that $f_*$ is not injective which is a contradiction.

\textbf{Case 2: $U$ is simply connected.}  Fix $x_1\in U$ and $\epsilon= d\bigl(x_1, B(X)\bigr)>0$.  Since $U\cap D(X) = \emptyset$ and $\partial U\subset B(X)$, we have that $B(X)$ separates $x_1$ and any point of $D(X)$.  Thus $d\bigl(x_1, D(X)\bigr) \geq d\bigl(x_1, B(X)\bigr)$. Choose $0<\delta<\epsilon/4$ such that $d\bigl(g\circ f(x), g\circ f(y)\bigr)<\epsilon/4$ for every $x,y\in \cl_X(U)$ with $d(x,y)<\delta$.

By Lemma \ref{simplyclosedcurve}, there exists a simply closed curve $\alpha: \mathbb S^1 \to \mathcal N_\delta(\partial U)\cap U$ which is homotopically essential in $\mathbb S^2\backslash\{c,x_1\}$ for any fixed $c\in D(X)$.

For every $x\in \im(\alpha)$ there exists a $u \in B(X)$ such that $d(x,u)< \delta$ and we have

\begin{align*}
	d\bigl(x, g\circ f(x)\bigr) &\leq d(x,u) + d\bigl(u, g\circ f(u)\bigr) + d\bigl( g\circ f(u), g\circ f(x)\bigr) \\ &
	< \epsilon/4 + 0 + \epsilon/ 4 \leq \epsilon/2< d\bigr(x, \{x_1, D(X)\}\bigl).
\end{align*}

Since $\alpha$ bounds a disc in  $X$, $f\circ\alpha$ must factor through a dendrite and hence so does $g\circ f\circ \alpha$.   Thus $g\circ f\circ\alpha$ is nullhomotopic in $\im(g\circ f \circ \alpha) \subset \mathbb S^2\backslash \{x_1,c\}$. However $\alpha$ is homotopically essential in $\mathbb S^2\backslash \{x_1,c\}$ which contradicts Lemma \ref{factordendrite}.

\end{proof}

\begin{lem}\label{small}
Suppose that $J$ is a simply closed curve in $\mathbb S^2$ and $W_n$ is a sequence of open subsets of  $J$ such that $W_n$ is contained in a connected component of $J\backslash \bigcup\limits_{i\neq n} \cl( W_i)$.  Then $\diam(W_n)$ converges to $0$.
\end{lem}

\begin{proof}
  Let $h: \mathbb S^1 \to J\subset \mathbb S^2$ be a homeomorphism.  Fix  a sequence of open subsets $W_n$ of  $J$ such that $W_i$ is contained in a connected component of $J\backslash \bigcup\limits_{i\neq n} \cl( W_i)$.  Let $U_n $ be the connected component of $J\backslash \bigcup\limits_{i\neq n} \cl\bigl( W_i\bigr)$ containing $W_n$.

  Since $J$  is compact, it can only contain finitely many disjoint open intervals of diameter greater than any fixed $\delta$. Thus $\diam (U_n)\leq \delta$  for all but finitely many $n$, which implies that $\diam(W_n)\leq \delta$ for all but finitely many $n$.
\end{proof}

\begin{lem}\label{part 2}

$X$ satisfies condition (\ref{ii}) of Theorem \ref{chara}.

\end{lem}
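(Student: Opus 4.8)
The plan is to prove the contrapositive in the spirit of Lemma \ref{part 1}: assume condition (\ref{ii}) fails and derive a contradiction with the injectivity of $(g\circ f)_*$ by producing an essential loop that $g\circ f$ kills. First I would invoke the Annulus Lemma (Lemma \ref{annulus}) to obtain an annulus $A\subset\mathbb S^2$ and infinitely many components $U_1,U_2,\dots$ of $A\setminus B(X)$, each meeting both boundary circles of $A$ and each disjoint from $D(X)$. The geometric content is that these $U_i$ are ``holes'' in the one-dimensional core that carry no puncture, so crossing them must cost fundamental-group information that a one-dimensional target $Y$ cannot record.

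\begin{proof}
Suppose that condition (\ref{ii}) fails. By the Annulus Lemma (Lemma \ref{annulus}) there exists an annulus $A$ in $\mathbb S^2$ and components $U_1,U_2,\dots$ of $A\setminus B(X)$ such that each $U_i$ meets both boundary components of $A$ and $U_i\cap D(X)=\emptyset$ for all $i$. Fix points $c,d\in D(X)$ lying in the two distinct components of $\mathbb S^2\setminus A$ (such points exist since $D(X)$ is dense away from the discrete set and $A$ separates $\mathbb S^2$). Since each $U_i$ connects the two boundary circles of $A$ while avoiding $B(X)$, a short separation argument as in Lemma \ref{separate} shows that the closure in $X$ of $U_i$ separates $B(X)$ in $X$, and hence $U_i$ separates $D(X)$ in $\mathbb S^2$, placing $c,d$ in distinct components of $\mathbb S^2\setminus\cll(U_i)$.

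For each $i$, using Lemma \ref{simplyclosedcurve} applied to $U_i$, choose a simple closed curve $\alpha_i$ contained in $U_i\cap X$ that is homotopically essential in $\mathbb S^2\setminus\{c,d\}$, and hence essential in $X$. Because the $U_i$ form a null sequence (the $U_i$ are disjoint and lie in the compact annulus $A$, so their diameters tend to $0$), we may take $\{\alpha_i\}$ to be a null sequence of essential loops, based near a single accumulation point $p\in B(X)\cap A$. The key observation is now the same obstruction used in Lemma \ref{part 1}: since each $\alpha_i$ lies in the single component $U_i$ of the complement of $B(X)$ and $U_i$ carries no point of $D(X)$, the map $f$ restricted to $U_i$ factors through a dendrite, so $f\circ\alpha_i$, and hence $g\circ f\circ\alpha_i$, is nullhomotopic; moreover for $i$ large the displacement $d\bigl(\alpha_i(t),g\circ f\circ\alpha_i(t)\bigr)$ is smaller than $d\bigl(\alpha_i(t),\{c,d\}\bigr)$, by the uniform-continuity estimate of Lemma \ref{part 1} together with Lemma \ref{identity} on $B(X)$. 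By Lemma \ref{factordendrite} this forces $\alpha_i$ to be nullhomotopic in $\mathbb S^2\setminus\{c,d\}$, contradicting its essentiality there. Hence condition (\ref{ii}) holds.
\end{proof}

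The step I expect to be the main obstacle is passing from the raw output of the Annulus Lemma to a genuine \emph{null sequence of essential loops sharing control}: I must simultaneously guarantee that the $\alpha_i$ have diameters going to zero, that they accumulate on a single point of $B(X)$ so that the uniform displacement estimate $d(\alpha_i(t),g\circ f\circ\alpha_i(t))<d(\alpha_i(t),\{c,d\})$ holds for large $i$, and that each $\alpha_i$ separates the \emph{same} fixed pair $c,d\in D(X)$. Securing these three conditions at once requires care in how the points $c,d$ are chosen relative to the annulus and in verifying that the components $U_i$ truly shrink; the factoring-through-a-dendrite argument and the appeal to Lemma \ref{factordendrite} then proceed exactly as in Lemma \ref{part 1}, so the novelty is concentrated in the geometric bookkeeping of the Annulus Lemma's output.
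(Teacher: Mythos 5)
Your proposal founders on a misreading of the geometry that the Annulus Lemma actually delivers. Each $U_i$ is a component of $A\setminus B(X)$ that meets \emph{both} boundary circles of $A$, so $\diam(U_i)\geq d(J_1,J_2)>0$ for every $i$: the $U_i$ are long strips crossing the annulus, not a null sequence. (Disjoint open sets in a compact space need not have diameters tending to zero, and here they provably do not.) What does tend to zero --- and what the paper isolates as the key point --- is $\diam(U_n\cap J_1)$ and $\diam(U_n\cap J_2)$, the traces on the boundary circles. Consequently your plan to extract a null sequence of essential loops $\alpha_i$ accumulating at a single point of $B(X)$ cannot be carried out. Worse, your choice of $c,d\in D(X)$ in the two complementary discs of $A$ is incompatible with the curves you want: a connected set joining $J_1$ to $J_2$ inside $A$ does not separate those two discs (think of a radial strip in a round annulus), so $\cll(U_i)$ need not separate $c$ from $d$ and Lemma \ref{simplyclosedcurve} gives you no curve in $U_i$ essential in $\mathbb S^2\setminus\{c,d\}$. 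The paper instead works with a \emph{single} well-chosen $U_{n_0}$, takes one point $x_{n_0}\in U_{n_0}$ far from $\partial A$ and one $d\in D(X)$ far from all the $U_n$, and produces a loop encircling $x_{n_0}$ --- a small loop around an interior point, not a loop going around the annulus.

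Two further gaps: first, your appeal to ``$f$ restricted to $U_i$ factors through a dendrite'' presupposes that $\alpha_i$ bounds a disc in $X$, which holds only when the relevant subregion is simply connected; the paper must treat the non-simply-connected case by an entirely separate argument (a $0$-dimensional separator $L\subset Y$, Lemma \ref{separate}, and Lemma \ref{kernel}), which your proposal omits. Second, the displacement estimate $d(x,g\circ f(x))<d(x,\{x_{n_0},d\})$ is genuinely delicate precisely because $\partial U_{n_0}$ contains ``Type (2)'' points on $J_1\cup J_2$ that are \emph{not} in $B(X)$, where $g\circ f$ is uncontrolled; the paper's cascade of constants $\delta_1,\delta_2,\delta_3$ and the passage to the subregion $W=U_{n_0}\setminus\cl_X(\mathcal N_{2\delta_1}(\partial A))$ exist exactly to route the curve away from those points or to show they are far from $x_{n_0}$. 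Your one-line assertion that the estimate holds ``for $i$ large'' does not engage with this, and in your setup it would not be true.
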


\begin{proof}
By way of contradiction, suppose that $X$ does not satisfy (\ref{ii}) of Theorem \ref{chara}.

By Lemma \ref{annulus}, there exists a closed annulus $A$ in $\mathbb S^2$ and components $U_1,U_2, \cdots$ of $A \backslash B(X)$ such that $U_n$ interests both boundary components of $A$ and $U_n \cap D(X) = \emptyset$ for all $n\in \mathbb N$. Let $J_1$ and $J_2$ be the boundary components of $A$.

Since $U_n\cap D(X)= \emptyset$ and $U_n$ is a component of $A\backslash B(X)$, we have that $\cl_{\mathbb S^2}(U_n)\cap D(X) = \emptyset$.  This implies that $\cll(U_n) = \cl_X(U_n)$ and hence $\cl_X\bigl(\bigcup\limits_n U_n\bigr)$ is compact.  (This follows since $\cl_X\bigl(\bigcup\limits_n U_n\bigr)\backslash \bigl(\bigcup\limits_n U_n\bigr) \subset B(X)$.)

The boundary of $U_n$ has two types of points.

\begin{enumerate}[{Type} (1)]
  \item Points that are contained in $B(X)$.  When restricted to these points, the map $f$ is a homeomorphism and $g\circ f$ is the identity.\vspace{1em}
  \item Points which are on $J_1\cup J_2\backslash B(X)$.  A priori we have not control over what $f$ or $g\circ f$ does on these types of points.  So we have to insure that for sufficiently large $n$ the diameter of the set of points in $U_n$ of this type is sufficiently small.
\end{enumerate}


Since each $U_n$ is open and has closure intersecting both $J_1$ and $J_2$, there exists an $\epsilon_1>0$ and a sequence of points $x_n \in U_n$ such that $d\bigl(x_n, J_1\cup J_2\bigr) \geq 4\epsilon_1$ for some $\epsilon_1>0$.  We may also fix $c\in D(X)$ such that $d\bigl(c, U_n\bigr)>4\epsilon_1$ for all $n$ (by possible passing to a cofinal subsequence of $U_n$ and choosing a smaller $\epsilon_1$).

Fix $0<\delta_1<\epsilon_1/4 $ such that $d\bigl(g\circ f (x), g\circ f(y)\bigr) < \epsilon_1$ for all $x,y \in \cl_X\Bigl(\bigcup\limits_n U_n\Bigr)$ with $d(x,y)<4\delta_1$.

Fix $0<\delta_2< \delta_1$ such that  $d\bigl(g\circ f (x), g\circ f(y)\bigr) < \delta_1$ for all $x,y \in \cl_X\Bigl(\bigcup\limits_n U_n\Bigr)$ with $d(x,y)<\delta_2$.

Since each $U_i$ is open and connected in $A$ and intersects both boundary components of A,  $U_i\cap J_k$ is contained in a connected component of $J_k\backslash \bigcup\limits_{i\neq n} \cl( U_i)$ for $k = 1,2$.  The by Lemma \ref{small} $\diam(U_n\cap J_1)$ and $ \diam (U_n\cap J_2)$ both converge to $0$, i.e. the set of points of $U_n$ of Type (2) can be partitioned into two subsets each of which has small diameter.  Hence we can fix an $n_0$ such that $\max\bigl\{\diam(U_{n_0}\cap J_1),\diam(U_{n_0}\cap J_2)\bigr\}< \delta_2$.  Let  $W$ be the component of $U_{n_0}\backslash \cl_X\bigl(\mathcal N_{2\delta_1} (\partial A)\bigr)$ containing $x_{n_0}$.

\textbf{Case 1: $W$ is not simply connected.}  Then $U_{n_0}$ has a boundary component $C_1$ such $d\bigl(C_1, x \bigr)\geq 2\delta_1$ for all points $x$ of Type 2.  Let $C_2$ be the component of $\partial U_{n_0}$ which intersects both $J_1$ and $J_2$.  Since $\max\bigl\{\diam(U_{n_0}\cap J_1),\diam(U_{n_0}\cap J_2)\bigr\}< \delta_2$ and $f$ restricts to a homeomorphism on $B(X)$, we see that $f(C_1)$ and $f(C_2)$ are disjoint closed sets.

 Then $f\bigl(\partial U_{n_0}\bigr)$ has at least two distinct components.  Let $\partial U_{n_0} = D_1\cup D_2$ where $D_1,D_2$ are disjoint nonempty closed sets with $C_i\subset D_i$ for $i =1,2$ and $f(D_1)\cap f(D_2)=\emptyset$.

Hence there exists a $0$-dimensional subspace $L$ of $Y$ which separates $f(D_1),f(D_2)$. Then $f^{-1}(L)$ separates $C_1, C_2$  which implies that a single component $E$ of $f^{-1}(L)$ separates $C_1, C_2$.  Since $f^{-1}(L)$ is disjoint from $\partial U_{n_0}$, $E$ is a subset $U_{n_0}$ and $E\cap B(X)= \emptyset$.  By hypothesis $U_{n_0}\cap D(X) = \emptyset$ and $\partial U_{n_0}\subset X$ which implies that $\cll(E) \cap D(X)=\emptyset$.

By construction, $E$ separates $B(X)$ so Lemma \ref{separate}  implies that $E$ separates $D(X)$ in $\mathbb S^2$.  Since $L$ is $0$-dimensional and $E$ is connected, $f$ is constant on $E$ and Lemma \ref{kernel} implies that $f_*$ is not injective which is a contradiction.

\textbf{Case 2: $W$ is simply connected.}

Let   $4\epsilon_2 = \min\bigl\{d(x_{n_0}, \partial U_{n_0}), 4\epsilon_1\bigr\}$.  We can choose $0<\delta_3\leq \min\{\epsilon_2, \delta_2\}$ such that  $4d\bigl(g\circ f (x), g\circ f(y)\bigr) < \epsilon_2$ for all $x,y\in \cll(U_{n_0})$ with $d(x,y)< \delta_3$.

By Lemma \ref{simplyclosedcurve}, there exists a simply closed curve $\alpha: \mathbb S^1 \to \mathcal N_{\delta_3}(\partial W)\cap W$ which is homotopically essential in $\mathbb S^2\backslash\{c,x_{n_0}\}$.  Since $\im(\alpha)\subset U_{n_0}$ and $d(U_{n_0}, c)\geq 4\epsilon_1$  we have that $d\bigl(\im(\alpha), c\bigr) \geq 4\epsilon_1$.  Points on $\partial W$ are on $\partial U_{n_0}$ or at least $4\epsilon_1- 2\delta_1$ from $x_{n_0}$. We now need to show that $d\bigl(x, g\circ f (x)\bigr) < d\bigl(x, \{x_{n_0}, c\}\bigr)$ for every $x\in \im(\alpha)$. This breaks down into two case corresponding to whether $x$ is close to $\partial U_{n_0}$ or far from $x_{n_0}$ (at least $4\epsilon_1- 2\delta_1-\delta_3$).

\textbf{Subcase 1:} $d\bigl(x, \partial U_{n_0}\bigr) \leq \delta_3$.  Then $d\bigl(x, x_{n_0}\bigr) > 4\epsilon_2 - \delta_3$ and $u \in B(X)$ such that $d(x,u)\leq \delta_3$.  Thus

\begin{align*}
	d\bigl(x, g\circ f(x)\bigr) &\leq d(x,u) + d\bigl(u, g\circ f(u)\bigr) + d\bigl( g\circ f(u), g\circ f(x)\bigr) \\ &
	< \epsilon_2/4 + 0 + \epsilon_2/ 4 \leq \epsilon_2/2<  d\bigr(x, \{x_{n_0}, c\}\bigl).
\end{align*}

\textbf{Subcase 2:} $d\bigl(\alpha(t), \partial U_{n_0}\bigr) > \delta_3$. Then there exists $v\in \partial W$ such that $d(v,x)\leq \delta_3$ and $d(v, \partial A \cap \partial U_{n_0} ) \leq 2\delta_1$.  This implies that $d(x, x_{n_0}) \geq 4\epsilon_1- 2\delta_1-\delta_3 > 2\epsilon_1$.   Since $J_i \cap U_{n_0}$ has diameter at most $\delta_2$ for $i\in\{1,2\}$, there exists $u\in B(X)\cap \partial U_{n_0}\cap \partial A$ such that $d(x,u) \leq \delta_3 + \delta _2  + 2\delta_1< 4\delta_1$.  Thus

\begin{align*}
	d\bigl(x, g\circ f(x)\bigr) &\leq d(x,u) + d\bigl(u, g\circ f(u)\bigr) + d\bigl( g\circ f(u), g\circ f(x)\bigr) \\ &
	< \epsilon_1 + 0 + \epsilon_1 \leq 2 \epsilon_1< d\bigr(x, \{x_{n_0}, c\}\bigl).
\end{align*}

Since $\alpha$ bounds a disc in  $X$, $f\circ\alpha$ must factor through a dendrite and hence so does $g\circ f\circ \alpha$.  However $\alpha$ is homotopically essential in $\mathbb S^2\backslash \{x_{n_0},c\}$ which contradicts Corollary \ref{factordendrite}.

\end{proof}

Thus by Theorem \ref{chara} and Corollary \ref{cor: one-dimensional planar}, $X$ is homotopy equivalent to a one-dimensional planar Peano continuum, which completes the proof of Theorem \ref{almostmain}.


\section{Proof of Theorem \ref{main}}

\begin{lem}
    Let $X$ be a one-dimensional path-connected Hausdorff space and $A$ a path-connected subset of $X$ such that the inclusion map $i: A\to X$ induces an isomorphism.  Then any path $p:[0,1]\to X$ with $p\bigl((0,1)\bigr) \subset X\backslash A$ and $p(0),p(1)\in A$ is a nullhomotopic loop.
\end{lem}

\begin{proof}
  Fix a path $p:[0,1]\to X$ with $p\bigl((0,1\bigr) \subset X\backslash A$.  Let  $\tilde p$ be the reduced representative of $p$.  If $\tilde p$ is a non-degenerate loop, we may assume that $\tilde p \bigl((0,1)\bigr) \subset X\backslash A$.   Let $q:[0,1]\to A$ be any reduced path in $A$ from $\tilde p(1)$ to $\tilde p(0)$. Then  $\tilde p*q$ is a reduced path, since $\tilde p\bigl((0,1)\bigr) \cap q\bigl([0,1]\bigr)= \emptyset$ or $\tilde p$ is a degenerate loop.  Since the inclusion map $i: A\to X$ induces an isomorphism on fundamental groups, every essential reduced loop is contained in $A$.  Thus $\tilde p$ must be a degenerate loop and $p$ is a nullhomotopic loop.
\end{proof}

\begin{cor}
Let $X$ be a one-dimensional path-connected Hausdorff space and $A$ a path-connected closed subset of $X$ such that the inclusion map $i: A\to X$ induces an isomorphism.  Then every reduced path starting and ending in $A$ is contained in $A$.
\end{cor}

\begin{prop}\label{homotopic to image}
Let $X$ be a Peano continuum and $Y$ a one-dimensional Hausdorff space.  Suppose that $f: X\to Y$ is a homotopy equivalence.  Then $Y$ is homotopy equivalent to the one-dimensional Peano continuum $f(X)$.
\end{prop}

\begin{proof}

Let $f: X\to Y$ be a homotopy equivalence with homotopy inverse $g: Y \to X$.   Then there exists maps $F: X\times I \to X$ and $H: Y\times I \to Y$ such that $H(0,y) = y, F(0,x) = x$ and $H(1,y) = f\circ g(y), F(1,x) = g\circ f(x)$. Since $f$ is a homotopy equivalence, $f_*$ gives an isomorphism for any choice of base points.  Define $h: X \to f(X)$ by $h(x) = f(x)$ for all $x\in X$.  Then $i\circ h = f$ where $i: f(X)\to Y$ is the inclusion map.  Thus $i_*: \pi_1\bigl(f(X), f(x_0)\bigr)\to \pi_1\bigl(Y, f(x_0)\bigr)$ is surjective, since $f_*$ is surjective.  By \cite[Corollary 3.3]{cc3}, the inclusion induced homomorphism from $\pi_1\bigl(f(X), f(x_0)\bigr)$ to $\pi_1\bigl(Y,f(x_0)\bigr)$ is injective.  Thus $i_*$ is an isomorphism and $\pi_1\bigl(Y, f(x_0)\bigr)$ is isomorphic to $\pi_1\bigr(f(X), f(x_0)\bigl)$.  Thus every reduced path starting and ending in $f(X)$ is contained in $f(X)$.  Then for every $y\in f(X)$, the path $H(t,y)$ has reduced representative contained in $f(X)$.  By the Hahn-Mazurkiewicz Theorem, the continuous image of a Peano continuum in Hausdorff space is a Peano continuum.  Thus $f(X)$ is a Peano continuum and by \cite[Theorem 3.9]{ccz} there exists a parametrization $\alpha_y$ of $H(t,y)$ such that $\tilde H : f(X) \times I \to f(X)$ given by $H(y,t) = \alpha_y(t)$ is continuous.  Thus $f, g|_{f(X)}$ are homotopy inverses and $X$ is homotopy equivalent to $f(X)$.

\end{proof}

For convenience, we will restate Theorem \ref{main} and Theorem \ref{main2} before proving them.

\begin{customthm}{\ref{main}}\hspace{3in}
     \begin{enumerate}
        \item A planar Peano continuum with homotopy dimension one is homotopy equivalent to a one-dimensional planar Peano continuum. 

     	\item In the class comprised of the union of one-dimensional Peano continua and planar Peano continua, the fundamental group determines the homotopy dimension.
     \end{enumerate}
\end{customthm}

\begin{proof}[Proof of Theorem \ref{main}]\hspace{3in}

\emph{Statement \textup(1\textup):} Let $X$ be a planar Peano continuum and suppose that $X$ has homotopy dimension one.  Then $X$ is homotopy equivalent to a codiscrete subset $Y$ of the two-sphere which satisfies both conditions of Theorem \ref{classification}.  By Corollary \ref{cor: one-dimensional planar}, we have that $Y$ is homotopy equivalent to a one-dimensional planar continuum.  Thus Proposition \ref{homotopic to image} implies that $X$ is homotopy equivalent to a planar one-dimensional Peano continuum contained in $Y$.

\emph{Statement \textup(2\textup):}  A simply connected one-dimensional Peano continuum is a dendrite, which implies contractible.  Thus we need only consider planar Peano continua.  Suppose that $X$ is a planar Peano continuum and that there exists a one-dimensional Peano continuum $Y$  such that $\pi_1(X,x_0)$ is isomorphic to $\pi_1(Y,y_0)$. By Theorem \ref{classification}, $X$ is homotopy equivalent to a codiscrete subset of $\mathbb S^2$, which by Theorem \ref{almostmain} is homotopy equivalent to some one-dimensional Peano continuum.  Thus $X$ is homotopy equivalent to a one-dimensional Peano continuum.  If the one-dimensional Peano continuum is simply connected, then it is contractible, which would imply that $X$ is also contractible.

\end{proof}

\begin{cor}Let $X$ be a planar Peano continuum.  Then $X$ has homotopy dimension one if and only if the fundamental group of $X$ is isomorphic to the fundamental group of a non-contractible one-dimensional Peano continuum.
\end{cor}

\begin{customthm}{\ref{main2}}
Let $X$ be a topological space.  If $X$ is homotopy equivalent to spaces $Y_1$ and $Y_2$ where $Y_1$ is one-dimensional and $Y_2$ is a Peano continuum, then $X$ is homotopy equivalent to a one-dimensional Peano continuum.  If, in addition, $X$ is homotopy equivalent to a planar set, then $\pi_1(X,x_0)$ is isomorphic to the fundamental group of a one-dimensional planar Peano continuum.
\end{customthm}

\begin{proof}[Proof of Theorem \ref{main2}]  Let $X$ be a topological space and suppose that $X$ is homotopy equivalent to spaces $Y_1$ and $Y_2$ where $Y_1$ is one-dimensional and $Y_2$ is a Peano continuum.  By Proposition \ref{homotopic to image}, we have that $Y_2$ is homotopy equivalent to a one-dimensional Peano continuum contained in $Y_1$ and hence so is $X$.

Suppose, in addition, that $X$ is homotopy equivalent to a planar set $Y_3$.  Let $f_2: X \to Y_2$ be a homotopy equivalence with homotopy inverse $g_2: Y_2 \to X$.   Then $f_{2*}\bigl(\pi_1\bigl( g_2(Y_2), g_2(y_2)\bigr)\bigr)$ is isomorphic to $\pi_1(Y_2, Y_2)$ by Proposition \ref{homotopic to image}.  Since $f_{2*}$ is an isomorphism, we have that $\pi_1\bigl( g_2(Y_2), g_2(y_2)\bigr)$ is isomorphic to $\pi_1(X,x_0)$.  By Theorem \ref{main}, we have that $g_2(Y_2)$ is homotopy equivalent to a one-dimensional planar Peano continuum.
\end{proof}

\section{Example}\label{example}

\begin{prop}\label{not perfect}

The set of points at which a planar Peano continuum is not locally simply connected is not a perfect invariant of the homotopy type or of the fundamental group.

\end{prop}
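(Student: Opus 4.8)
The plan is to take the two spaces already under discussion, the standard Sierpinski carpet $S$ and the filled Sierpinski carpet $S'$ obtained by filling in a single removed square of $S$, and to show that their non-(semi)locally-simply-connected sets coincide while Theorem \ref{main} separates their fundamental groups. Since the paragraph preceding the statement records, via \cite{ccz}, that $S$ has homotopy dimension one and $S'$ has homotopy dimension two, the only thing left to verify is the computation of $B(S)$ and $B(S')$.

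First I would show that $B(S) = S$. Every point $p$ of the carpet is an accumulation point of the deleted squares at arbitrarily small scales: any neighborhood of $p$ contains a scaled copy of the whole carpet and therefore a genuine hole, around which one can draw an essential loop. As the neighborhood shrinks these essential loops persist, so $S$ fails to be semilocally simply connected at each of its points, giving $B(S) = S$.

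Next I would identify $B(S')$. Writing $Q$ for the open square that has been filled, we have $S' = S \cup Q$. On the open disk $Q$ the space $S'$ is locally Euclidean, hence locally simply connected, so $Q \cap B(S') = \emptyset$. Every point of $\partial Q$, on the other hand, still lies on the carpet and still has the deleted squares of the neighboring subcarpets accumulating on it, so it remains a point of non-semilocal-simple-connectivity; and every carpet point away from $\partial Q$ is bad for the reason given above. Hence $B(S') = S' \setminus Q = S = B(S)$, so the two sets are not merely homeomorphic but equal.

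Finally I would assemble the conclusion. Because $S$ is itself a one-dimensional Peano continuum, $\pi_1(S)$ is the fundamental group of a one-dimensional Peano continuum; because $S'$ has homotopy dimension two, Theorem \ref{main} forbids $\pi_1(S')$ from being isomorphic to the fundamental group of any one-dimensional Peano continuum. Thus $\pi_1(S) \not\cong \pi_1(S')$, and \emph{a fortiori} $S$ and $S'$ are not homotopy equivalent, even though $B(S)$ and $B(S')$ are homeomorphic. This exhibits $B$ as a perfect invariant of neither the homotopy type nor the fundamental group. The only delicate point is the verification that filling a single square produces no good points beyond the open filled square itself — in particular that all of $\partial Q$ remains bad — and this is exactly where the self-similarity of the carpet is used.
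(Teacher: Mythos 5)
Your proposal is correct and is essentially the paper's own argument: the paper derives the proposition from the fact that the Sierpinski carpet and the filled Sierpinski carpet have homeomorphic (indeed equal) bad sets but, by \cite{ccz} and Theorem \ref{main}, different homotopy dimensions and hence non-isomorphic fundamental groups. Your write-up simply makes explicit the computation $B(S)=B(S')=S$, which the paper leaves implicit.
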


\begin{proof}
  In \cite{ccz} Cannon, Conner, and Zastrow proved that a filled Sierpinski carpet is not homotopy equivalent to the standard Sierpinski carpet.  Specifically they showed that the filled Sierpinksi carpet has homotopy dimension two while the Sierpinski carpet has homotopy dimension one while the  set of points at which they are not locally simply connected is the same for both.  By applying Theorem \ref{main} to this example, we can see that the Sierpinksi carpet and the filled Sierpinksi carpet cannot have isomorphic fundamental groups.
\end{proof}

The fundamental group determines the set $B(X)$ with its topology, as well as the homotopy dimension for a planar Peano continuum $X$.  Thus a natural question is what other topologically defined properties of a planar continuum are determined by the fundamental group.

The following is an example of two planar Peano continua with the same homotopy dimensions for which it is unknown if they have isomorphic fundamental groups or if they are homotopy equivalent.

\begin{example}
A Warsaw circle is a space homeomorphic to

$$ \Bigl\{ \bigl(x, \sin(\pi/x)\bigr) \mid 0< x\leq 2\Bigr\}\cup \Bigl\{(x,y) \mid x\in\{0,2\}, \hspace{.5em} y\in [-2,1]\Bigr\}\cup \Bigl\{(x,-2) \mid x\in [0,2]\Bigr\}.$$

Notice that every Warsaw circle is tamely embedded into $\mathbb S^2$, i.e. the complement has exactly two simply connected components.  Let $W$ be a Warsaw circle  in $\mathbb S^2$ with  complementary components $C_1,C_2$.  Let $D(Y_i)$ be a null sequence of open discs in $C_i$ with limit set $W$.  Let $X= \mathbb S^2\backslash D(Y_1)$ and $Y = \mathbb S^2\backslash D(Y_2)$.

\end{example}

Since there is no continuous map fixing $W$ which interchanges the complementary components of $W$ in $\mathbb S^2$, it is not clear if $X$ and $Y$ should be homotopy equivalent or not.

\begin{quest}
Are $X$ and $Y$ homotopy equivalent?  Or possible weaker, do they have isomorphic fundamental groups?
\end{quest}

\begin{quest}
If $X$ and $Y$ are not homotopy equivalent, is there an invariant that can be used to distinguish between these two spaces?
\end{quest}

\def\cprime{$'$} \def\cprime{$'$}


\begin{thebibliography}{10}

\bibitem{Brazas2012}
Jeremy Brazas.
\newblock Semicoverings: a generalization of covering space theory.
\newblock {\em Homology Homotopy Appl.}, 14(1):33--63, 2012.

\bibitem{cc3}
J.~W. Cannon and G.~R. Conner.
\newblock On the fundamental groups of one-dimensional spaces.
\newblock {\em Topology Appl.}, 153(14):2648--2672, 2006.

\bibitem{cc4}
J.~W. Cannon and G.~R. Conner.
\newblock The homotopy dimension of codiscrete subsets of the 2-sphere
  {$\mathbb S^2$}.
\newblock {\em Fund. Math.}, 197:35--66, 2007.

\bibitem{ccz}
J.~W. Cannon, G.~R. Conner, and A.~Zastrow.
\newblock One-dimensional sets and planar sets are aspherical.
\newblock {\em Topology Appl.}, 120(1-2):23--45, 2002.
\newblock In memory of T. Benny Rushing.

\bibitem{ConnerLamoreaux2005}
G.~R. Conner and J.~W. Lamoreaux.
\newblock On the existence of universal covering spaces for metric spaces and
  subsets of the {E}uclidean plane.
\newblock {\em Fund. Math.}, 187(2):95--110, 2005.

\bibitem{ce}
G.R. Conner and K.~Eda.
\newblock Fundamental groups having the whole information of spaces.
\newblock {\em Topology Appl.}, 146/147:317--328, 2005.

\bibitem{ConnerKent19}
Gregory Conner and Curtis Kent.
\newblock Fundamental groups of locally connected subsets of the plane.
\newblock {\em Adv. Math.}, 347:384--407, 2019.

\bibitem{eda}
K.~Eda.
\newblock The fundamental groups of one-dimensional spaces and spatial
  homomorphisms.
\newblock {\em Topology Appl.}, 123(3):479--505, 2002.

\bibitem{EilenbergGanea57}
Samuel Eilenberg and Tudor Ganea.
\newblock On the {L}usternik-{S}chnirelmann category of abstract groups.
\newblock {\em Ann. of Math. (2)}, 65:517--518, 1957.

\bibitem{FischerZastrow2005}
Hanspeter Fischer and Andreas Zastrow.
\newblock The fundamental groups of subsets of closed surfaces inject into
  their first shape groups.
\newblock {\em Algebr. Geom. Topol.}, 5:1655--1676 (electronic), 2005.

\bibitem{FischerZastrow2013}
Hanspeter Fischer and Andreas Zastrow.
\newblock A core-free semicovering of the {H}awaiian {E}arring.
\newblock {\em Topology Appl.}, 160(14):1957--1967, 2013.

\bibitem{Hatcher}
A.~Hatcher.
\newblock {\em Algebraic topology}.
\newblock Cambridge University Press, Cambridge, 2002.

\bibitem{Kentpreprint2}
Curtis Kent.
\newblock Homotopy type of planar continua.
\newblock {\em Submitted to Advances in Mathematics.}

\bibitem{Kent18}
Curtis Kent.
\newblock Homomorphisms of fundamental groups of planar continua.
\newblock {\em Pacific J. Math.}, 295(1):43--55, 2018.

\bibitem{Liem81}
Vo~Thanh Liem.
\newblock Homotopy dimension of some orbit spaces.
\newblock {\em Pacific J. Math.}, 92(2):357--363, 1981.

\bibitem{Stallings68}
John~R. Stallings.
\newblock On torsion-free groups with infinitely many ends.
\newblock {\em Ann. of Math. (2)}, 88:312--334, 1968.

\bibitem{Swan69}
Richard~G. Swan.
\newblock Groups of cohomological dimension one.
\newblock {\em J. Algebra}, 12:585--610, 1969.

\bibitem{vm}
J.~van Mill.
\newblock {\em Infinite-dimensional topology}, volume~43 of {\em North-Holland
  Mathematical Library}.
\newblock North-Holland Publishing Co., Amsterdam, 1989.
\newblock Prerequisites and introduction.

\bibitem{Wall65}
C.~T.~C. Wall.
\newblock Finiteness conditions for {${\rm CW}$}-complexes.
\newblock {\em Ann. of Math. (2)}, 81:56--69, 1965.

\bibitem{Wilder49}
Raymond~Louis Wilder.
\newblock {\em Topology of {M}anifolds}.
\newblock American Mathematical Society Colloquium Publications, vol. 32.
  American Mathematical Society, New York, N. Y., 1949.

\end{thebibliography}

\end{document}